\newcommand\les{\lesssim}
\newcommand\bj{\textbf{j}}
\newcommand\R{\mathbb{R}}
\newcommand\C{\mathbb{C}}
\newcommand\Z{\mathbb{Z}}
\newcommand\N{\mathbb{N}}
\newcommand{\calE}{\mathcal E}
\newcommand{\calH}{\mathcal H}
\newcommand{\calR}{\mathcal R}
\newcommand{\calC}{\mathcal C}
\newcommand{\ls}{{\lesssim}}
\newcommand\la{\langle}
\newcommand\ra{\rangle}
\newtheorem{theo}{Theorem}
\numberwithin{theo}{section} 
\newtheorem{lema}[theo]{Lemma}
\newtheorem{corol}[theo]{Corollary}
\newtheorem{defin}[theo]{Definition}
\newtheorem{con}{Condition}
\numberwithin{equation}{section}
\begin{document}
\title{The multilinear restriction estimate: almost optimality and localization}

\author[I. Bejenaru]{Ioan Bejenaru} \address{Department
  of Mathematics, University of California, San Diego, La Jolla, CA
  92093-0112 USA} \email{ibejenaru@math.ucsd.edu}

\begin{abstract} The first result in this paper provides a very general $\epsilon$-removal argument for the multilinear restriction estimate. 
The second result provides a refinement of the multilinear restriction estimate in the case when some terms have appropriate localization properties;
this generalizes a prior result of the author in \cite{Be1}.

\end{abstract}

\subjclass[2010]{42B15 (Primary);  42B25 (Secondary)}

\maketitle

\section{Introduction}

For $n \geq 2$, let $U \subset \R^{n-1}$ be an open, bounded and connected  neighborhood of the origin and let $\Sigma: U \rightarrow \R^{n}$ be a smooth parametrization of an $n-1$-dimensional submanifold of $\R^{n}$ (hypersurface), which we denote by $S=\Sigma(U)$. By a smooth parametrization we mean that $\Sigma$ satisfies
\begin{equation} \label{smooth}
\| \partial^\alpha \Sigma \|_{L^\infty(U)} \lesssim_\alpha 1,
\end{equation} 
for $|\alpha| \leq N$ for some large $N$. We say that $S$ is a smooth hypersurface if it admits a parametrization satisfying \eqref{smooth}.
To such a parametrization of $S$ we associate the extension operator $\calE$  defined by
\[
\calE f(x) = \int_U e^{i x \cdot \Sigma(\xi)} f(\xi) d\xi, 
\]
where $f \in L^1(U)$ and $x \in \R^n$. This operator is closely related to a more intrinsic formulation of the extension operator:
\[
\tilde \calE g(x)= \int_S e^{ix \cdot \omega} g(\omega) d \sigma_S(\omega),
\]
where $g \in L^1(S; d \sigma_S)$ and $x \in \R^n$. Indeed, using the parametrization $\Sigma$ as above we obtain
$\omega=\Sigma(\xi)$ and $d \sigma_S(\omega)=|\frac{\partial \Sigma}{\partial_{\xi_1}} \wedge ... \wedge \frac{\partial \Sigma}{\partial_{\xi_{n-1}}}| d \xi$, thus
with $f(\xi)= g(\Sigma(\xi)) |\frac{\partial \Sigma}{\partial_{\xi_1}} \wedge ... \wedge \frac{\partial \Sigma}{\partial_{\xi_{n-1}}}|$, the two formulations are equivalent. 
We will use good parameterizations in the sense that $ |\frac{\partial \Sigma}{\partial_{\xi_1}} \wedge ... \wedge \frac{\partial \Sigma}{\partial_{\xi_{n-1}}}| \approx 1$
throughout the domain and, given that all results are in terms of $L^p$ norms, the equivalence is carried out at the levels of results as well. 

In order to avoid unnecessary technical issues, throughout this paper we assume that, in the definition of $\calE f$, the support of $f$ is a compact subset of $U$; in other words $f$ is supported away from the boundary of $U$. A similar assumption will be in place for the corresponding $g$ that appears in the definition of 
$\tilde \calE g$.

Given $k$ smooth, compact hypersurfaces $S_i \subset \R^{n}, i=1,..,k$, where $1 \leq k \leq n$, we consider the following $k$-linear restriction estimate is the following
inequality 
\begin{equation}  \label{MRE}
\| \Pi_{i=1}^k \calE_i f_i \|_{L^p(\R^{n})} \les \Pi_{i=1}^k \| f_i \|_{L^2(U_i)}.  
\end{equation}
In a more compact format this estimate is abbreviated as follows:
\begin{equation} \label{Rsp}
\calR^*(2 \times ... \times 2 \rightarrow p).
\end{equation}
A natural condition to impose on the hypersurfaces is the standard transversality condition: there exists $\nu >0$ such that
\begin{equation} \label{trans}
| N_1(\zeta_1) \wedge ... \wedge N_{k}(\zeta_k) | \geq \nu,
\end{equation}
for any $\zeta_i \in S_i, i \in \{1,..,k\}$; here $N_i(\zeta_i)$ is the unit normal at $\zeta_i$ to $S_i$ and it is clear that the choice of orientation is not important. 

There are two main conjectures regarding the optimal exponent in \eqref{Rsp}. For the generic case, when only the transversality condition
is assumed, the optimal conjectured exponent for \eqref{Rsp} is $p=\frac2{k-1}$. 
There is also the non-generic case when, in addition to the transversality condition, one assumes also appropriate curvature conditions on the hypersurfaces
$S_i$ and in this case the optimal conjectured exponent in \eqref{Rsp} is $p=\frac{2(n+k)}{k(n+k-2)}$ (note that this value is $< \frac2{k-1}$). 
This paper considers problems related to the generic case, but the results obtained here will have implications for the non-generic case. 

In \cite{BeCaTa} Bennett, Carbery and Tao established the near-optimal version of the conjectured result in the generic case: for any $\epsilon > 0$
and for any $R >0$ and $x_0 \in \R^n$ the following holds true
\begin{equation}  \label{MRENO}
\| \Pi_{i=1}^k \calE_i f_i \|_{L^{\frac2{k-1}}(B_R(x_0))} \lesssim_\epsilon R^{\epsilon} \Pi_{i=1}^k \| f_i \|_{L^2(U_i)},
\end{equation}
where $B_R(x_0)$ is the ball of radius $R$ centered at $x_0$. In a more compact form, this results is saying that
$\calR^*(2 \times ... \times 2 \rightarrow p, \epsilon)$ holds true. 

Closely related to this estimate, there is the multilinear Kakeya estimate. The multilinear Kakeya estimate follows from the multilinear restriction estimate by using a standard Rademacher-function argument, see \cite{BeCaTa} for details. Vice-versa, one can obtain the multilinear restriction estimate from the multilinear Kakeya estimate using a more delicate argument which incurs losses. In \cite{BeCaTa} these losses are of type $R^\epsilon$. We do not formalize the multilinear Kakeya estimate as it plays no role in our analysis; however it is important that we mention it for reference purposes. 

An alternative and shorter proof for the near-optimal multilinear Kakeya estimate was provided by Guth in \cite{Gu-easy}. An alternative and shorter proof for the multilinear restriction estimate that bypasses the use of the multlinear Kakeya estimate was provided by the author in \cite{Be1}. 

Removing the factor of $R^\epsilon$ in these estimates seems to be a very difficult problem, even in the non-endpoint case. A major breakthrough was made by 
Guth in \cite{Gu-main} where he proves the end-point case for the multilinear Kakeya estimate with no loss in $R$. In this paper Guth employs algebraic topology tools and initiates the use of the polynomial partitioning in the restriction theory that proved to be a very powerful tool, see \cite{Gu-I,Gu-II, Wa, HiRo}. 

The result of Guth in \cite{Gu-main} does not remove the factor of $R^\epsilon$ for the multilinear restriction theory, not even in the non-endpoint case. 
However, in \cite{Ben} Bennett uses Guth's result in \cite{Gu-main} to improve the loss in \eqref{MRENO} from $R^\epsilon$ to $(log{R})^\kappa$, for some large $\kappa$.

Bourgain and Guth provide in \cite{BoGu} an $\epsilon$-removal type argument that establishes \eqref{Rsp} for $p > \frac2{k-1}$, with no loss in $R$,
in the case of hypersurfaces with non-degenerate curvature. Their proof is a modification of the $\epsilon$-removal argument of Tao in \cite{Tao-BR} in a linear context. In a very recent paper, see \cite{Tao-new}, Tao removes the factor $R^\epsilon$ in the non-endpoint case for the multilinear restriction estimate in a very general setup for the hypersurfaces involved - a certain amount of regularity is required, but nothing else. This is a very involved result that further refines the heat flow method from \cite{BeCaTa}. Tao's recent result essentially leaves only the optimal problem \eqref{Rsp} with $p=\frac2{k-1}$ as an open problem. 

Our first result in this paper provides an $\epsilon$-removal argument for general hypersurfaces which have the property that the Fourier transform of the surface measure displays some decay. Precisely we assume that there exists some $\alpha >0$ such that the following decay property holds true
\begin{equation} \label{decdef}
| \calE \psi | \lesssim C(\psi) (1+|x|)^{-\alpha}
\end{equation}
for any smooth $\psi$ supported within $U$.

\begin{theo} \label{epsr}  Assume that the hypersurfaces $S_i, i=1,..,k$ are as above, satisfy the transversality condition \ref{trans} and the decay condition \ref{decdef} for some  $\alpha >0$. If $\mathcal{R}^*(2 \times ... \times 2 \rightarrow p,\epsilon)$
holds true for some $p \geq \frac2k$, then $\mathcal{R}^*(2 \times ... \times 2 \rightarrow q)$ holds true for any $q \geq p + (n+p+1)\frac{C}{\log \frac1\epsilon}$, where $C$ is any constant satisfying $C > \min(2,n-1)$. 
\end{theo}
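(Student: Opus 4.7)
The plan is to adapt the $\epsilon$-removal technique of Tao from \cite{Tao-BR} (and its multilinear variant in \cite{BoGu}) to the present setting, with the pointwise decay hypothesis \eqref{decdef} taking over the role previously played by curvature-driven decay of the surface measure. Normalize $\|f_i\|_{L^2(U_i)}=1$ and fix a large parameter $R$, to be optimized at the end as a function of $\epsilon$ and $q-p$. A layer-cake decomposition in the height of $\Pi_{i=1}^k \calE_i f_i$, followed by standard dyadic pigeonholing, reduces the global $L^q$ estimate to the following restricted weak-type claim on sparse unions of $R$-balls: for any union $E=\bigcup_{j=1}^N B(x_j,R)$ whose centers are pairwise separated by at least $R^A$ (for some exponent $A=A(\epsilon)$ to be chosen), and any $\lambda>0$,
\[
\lambda^q \, \bigl|\{x\in E : |\Pi_{i=1}^k \calE_i f_i(x)|>\lambda\}\bigr| \les 1.
\]

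I would prove this by interpolating two complementary bounds on $E$. \emph{Bound A} is the direct application of the hypothesis: summing $\calR^*(2\times\ldots\times 2\to p,\epsilon)$ over the $N$ balls of the sparse set gives
\[
\|\Pi_i \calE_i f_i\|_{L^p(E)}^{p} \les N R^{p\epsilon},
\]
which is lossy in $R$. \emph{Bound B} is an $R$-loss-free sparse estimate: for some exponent $p_0\ge p$ and some $\sigma=\sigma(A,\alpha,n)>0$,
\[
\|\Pi_i \calE_i f_i\|_{L^{p_0}(E)}^{p_0} \les N^{1-\sigma}.
\]
This is the heart of the argument. To derive it, decompose each $f_i=\sum_\ell f_{i,\ell}$ into unit-Fourier-scale pieces so that each $\calE_i f_{i,\ell}$ inherits, via \eqref{decdef}, the decay $(1+|x|)^{-\alpha}$ (modulated to the center of the support of $f_{i,\ell}$). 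By transversality \eqref{trans}, each product $\Pi_i \calE_i f_{i,\ell_i}$ concentrates near a single joint transverse-intersection point with decay away from it. The $R^A$-separation of the $B(x_j,R)$'s then forces any such product to contribute to at most one ball significantly, with all other balls picking up the small factor $R^{-A\alpha}$; choosing $A$ large enough relative to $1/\alpha$ makes the off-diagonal interactions summable despite the polynomial bookkeeping. Combining this tail control with the trivial diagonal ($L^{2/k}$ Cauchy--Schwarz type) bound produces the desired $N^{1-\sigma}$ saving.

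With Bounds A and B in hand, interpolate in $L^q$ with parameter $\theta\in(0,1)$ chosen so that the $N$-exponent of the combined bound vanishes; this turns the claim into an inequality of the form $\lambda^q N R^n \les R^{c\epsilon}$ for an explicit $c=c(n,p,p_0,\sigma)$. Optimizing in $R$ by taking $\log R \sim \log(1/\epsilon)$ produces the condition $q-p\gtrsim 1/\log(1/\epsilon)$, with the stated prefactor $(n+p+1)C$ dictated by the arithmetic of the interpolation; the constraint $C>\min(2,n-1)$ encodes the two natural choices of anchor exponent $p_0$ for Bound B (an $L^2$ Plancherel anchor for $n\ge 3$, and a lower-dimensional alternative for $n=2$). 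The main technical obstacle, by a wide margin, is the proof of Bound B: extracting a \emph{global, multilinear, $R$-loss-free} sparse-set estimate from the \emph{pointwise and one-function} decay \eqref{decdef}, and tracking the dependence on $A$ quantitatively enough for the interpolation to produce the announced gain, is where essentially all of the work lies.
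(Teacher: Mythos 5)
Your high-level orientation is right -- the proof is indeed an adaptation of Tao's sparse-ball $\epsilon$-removal from \cite{Tao-BR} -- but there are two genuine gaps, and they sit exactly where you acknowledge "essentially all of the work lies."

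First, the reduction to sparse unions of balls is not "standard dyadic pigeonholing"; it is Tao's covering lemma (Lemma 3.3 of \cite{Tao-BR}), which covers the thickened level set by $O(N|F|^{1/N})$ sparse collections whose radii are only bounded by $|E|^{C^N}$. Those two quantitative outputs -- the number of collections and the doubly exponential radius bound -- are precisely what produce the exponent $\beta=\frac1N+p\epsilon C^N$ and hence, after choosing $N\approx C^{-1}\log\frac1\epsilon$, the stated gain $q\geq p+(n+p+1)\frac{C}{\log\frac1\epsilon}$. Your scheme treats $R$ as a free parameter to be optimized with $\log R\sim\log\frac1\epsilon$, which cannot reproduce this arithmetic: in the actual argument $R$ is dictated by the covering lemma, and the free parameter is $N$. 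Without the covering lemma the restricted weak-type claim on an arbitrary level set does not reduce to the sparse case at all.

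Second, your "Bound B" is both unproved and aimed at the wrong target. The paper does not interpolate a lossy bound $NR^{p\epsilon}$ against a gain $N^{1-\sigma}$; it proves that each sparse collection $O_l$ satisfies $\|\Pi_i\calE_i f_i\|_{L^p(O_l)}\les R_l^\epsilon$ with \emph{no} factor of $N$. This is done by (i) upgrading the hypothesis to a refined form \eqref{lEF} in which each $\|f_i\|_{L^2}$ is replaced by the mass of $\calE_i f_i$ restricted to the slab through the cube $Q$ (with polynomial off-diagonal weights, obtained via the commutator identity \eqref{com}), and (ii) an almost-orthogonality lemma proved by a $TT^*$ argument: for a sparse family $\{Q_j\}$, $\sum_j\|\calE f|_{x_n=c_n(Q_j)}\|_{L^2(\pi_n Q_j)}^2\les\|f\|_{L^2}^2$, where the operator norm of $TT^*$ is controlled by the kernel decay $|K(x)|\les(1+|x|)^{-\alpha}$ coming from \eqref{decdef} together with the separation $R^CN^C$ with $C>\min(2,n-1)/\alpha$. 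Your proposed mechanism -- that transversality makes each product $\Pi_i\calE_i f_{i,\ell_i}$ "concentrate near a single joint intersection point" so that it contributes to at most one ball -- does not follow from \eqref{decdef}, which only gives decay of $\calE_i\psi$ for smooth $\psi$ (equivalently, of the convolution kernel), not pointwise concentration of $\calE_i f_i$ for $L^2$ data; and the transversality condition \eqref{trans} plays no role in the sparse summation, only in the near-optimal input. Relatedly, the hypothesis $p\geq\frac2k$ enters because the $k$ per-surface quantities are each square-summable over the sparse cubes, so their product is $\ell^{2/k}\subset\ell^p$; your proposal never uses this threshold, and your reading of $C>\min(2,n-1)$ as reflecting "two anchor exponents $p_0$" is not what the condition encodes.
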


The first thing to notice in the above statement is that our threshold for the exponent $p$ is $\frac2k$ which is lower than the conjectured one of
$\frac2{k-1}$. This improvement may seem artificial or vacuous at best, given that the conjectured optimal exponent is $\frac2{k-1}$. However $\frac2{k-1}$ is the threshold  for the generic case and it is conjectured that under appropriate curvature hypothesis on the hypersurfaces $S_i$ the threshold in \eqref{Rsp} can lowered to exponents $p < \frac2{k-1}$; in fact for some special class of hypersurfaces the author established the result up to, but excluding, the end-point $p=\frac{2(n+k)}{k(n+k-2)}$, see \cite{Be3,Be4}. 

Similar results have been obtained by Bourgain \cite{Bou-CM} and Tao and Vargas \cite{TV-CM1}, both in the context of the bilinear restriction estimate. 
When taking into account the extended range of $p$, our result is new in the context of multilinear restriction estimate where the multilinearity is trilinear or higher in order; however it is not new when $k=n$, simply because the conjectured  value of $p$ is the same in the generic case as in the non-generic case. 

Next we introduce the type of a hypersurface (an analytic condition that essentially guarantees that \eqref{decdef} holds true) and consider also the case when \eqref{decdef} holds only for some hypersurfaces, while the others are subsets of hyperplanes.  Following \cite{St} (see chapter 8, section 3.2), we define the $l$-type of a hypersurface $S=\Sigma(U)$. We fix $x_0 \in U$ and assume that there exists $k \in \N$
such that for every unit vector $\eta$ there exists $\alpha$ with $|\alpha| \leq k$ for which $|\partial^\alpha (\Sigma \cdot \eta) (x_0)| \neq 0$. The smallest such $l$ is called the type of $S$ at $x_0$. Assuming that $U$ is a subset of compact set where $\Sigma$ is defined, we define the type of $S$ to be the maximum of the types
pf the $x_0 \in U$. In the real-analytic class, the condition that $S$ has a finite type is equivalent to $S$ not being a subset of any affine hyperplane. 
Outside this class, things can get more complicated. 

As a direct consequence or by simple modifications of the argument in Theorem \ref{epsr} we obtain the following result. 

\begin{corol} \label{epsrc}
i) If all $S_i, i=1,..,k,$ are of finite type, then the result of Theorem \ref{epsr} holds true. 

ii) Assume that there exists $0 \leq k_1 < k$ such that $S_i$ are of $l_i$-type, where $l_i < \infty, \forall i=1,..,k_1$, while $S_i, i=k_1+1,..,k$ are subsets of hyperplanes. If $\mathcal{R}^*(2 \times ... \times 2 \rightarrow p,\epsilon)$
holds true for some $p \geq \frac2{k-1}$, then $\mathcal{R}^*(2 \times ... \times 2 \rightarrow q)$ holds true for any  $q \geq p + (n+p+1)\frac{C}{\log \frac1\epsilon}$, where $C$ is any constant satisfying $C > \min(2,n-1)$.

iii) If all $S_i, i=1,..,k$ are real-analytic, then the result in ii) above holds true. 

\end{corol}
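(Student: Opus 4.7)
For \textbf{(i)}, the only thing to check is that a finite-type hypersurface satisfies the decay condition \eqref{decdef}; once this is in hand, Theorem \ref{epsr} applies verbatim. The classical van der Corput lemma together with the dyadic phase decomposition in Stein \cite{St}, Chapter 8, Section 3.2, gives precisely this: $|\calE_i \psi_i(x)| \les C(\psi_i) (1+|x|)^{-1/l_i}$ whenever $S_i$ has type $l_i$ and $\psi_i$ is supported in the interior of $U_i$, so $\alpha = 1/\max_i l_i > 0$ works uniformly. Thus (i) is immediate.

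For \textbf{(ii)}, the plan is to track where the decay \eqref{decdef} is actually used in the proof of Theorem \ref{epsr} and replace its role in each flat factor by a combination of Plancherel in the $n-1$ tangential directions and the trivial bound $\|\calE_i f_i\|_{L^\infty} \les \|f_i\|_{L^2}$ in the normal direction. After an affine change of variable on each hyperplane, $\calE_i f_i(x) = e^{i b_i \cdot x}\, \hat f_i(A_i^T x)$ for some rank-$(n-1)$ linear map $A_i$, so the flat extensions are essentially Fourier transforms in $n-1$ variables and the Plancherel substitution is clean. I would then re-run the geometric pigeonhole/decomposition scheme of Theorem \ref{epsr}; the gap factor $(n+p+1) C / \log \frac{1}{\epsilon}$ is inherited unchanged because it is dimensional in origin, but the absence of curvature in $k-k_1$ of the factors forces the starting threshold up from $\frac{2}{k}$ to the generic transversality value $\frac{2}{k-1}$.

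The main obstacle is the extreme case $k_1 = 0$, where \emph{every} surface is flat: there one cannot lean on any curved factor to drive the iteration, and the base estimate has to be supplied independently. Fortunately this is morally the Loomis--Whitney regime: writing each $\calE_i f_i$ as a Fourier transform and using the transversality of the $k$ normals to guarantee that they span a $k$-plane, the $L^{2/(k-1)}$ bound reduces by Plancherel and orthogonality to a finite Loomis--Whitney-type inequality, which carries no $\epsilon$ loss and gives the required conclusion directly.

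Finally, \textbf{(iii)} follows from (ii) together with the dichotomy that a real-analytic parametrization $\Sigma_i : U_i \to \R^n$ either has finite type uniformly on $\bar U_i$ or sends $U_i$ into an affine hyperplane: if some $\eta \in \S^{n-1}$ and some $x_0 \in U_i$ satisfy $\partial^\alpha(\Sigma_i \cdot \eta)(x_0) = 0$ for all multi-indices $\alpha$, then by connectedness and analyticity $\Sigma_i \cdot \eta$ is constant on $U_i$ and $S_i$ lies in the corresponding hyperplane; otherwise a compactness argument on $\S^{n-1} \times \bar U_i$ upgrades the pointwise finite type into a uniform $l_i$-type. Relabeling so that the finite-type surfaces come first places us in the hypotheses of (ii).
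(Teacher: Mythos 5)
Parts (i) and (iii) are correct and coincide with the paper's argument: (i) is exactly the citation of Stein's finite-type decay theorem (Chapter 8, Section 3.2, Theorem 2) feeding \eqref{decdef} with $\alpha=1/l$ into Theorem \ref{epsr}, and your analytic dichotomy (finite type versus contained in an affine hyperplane, via connectedness) is precisely how the paper reduces (iii) to (ii).

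For (ii) your ingredients are the right ones --- a flat factor is constant in its normal direction, so it contributes only Plancherel/$l^2$ information tangentially and a trivial bound normally --- but the step that actually produces the threshold $\frac{2}{k-1}$ is missing. In the proof of Theorem \ref{epsr} the decay \eqref{decdef} enters only through Lemma \ref{sparse}, which gives $l^2$ summability of each factor over the sparse family of cubes $Q\in O_l$, whence the product lies in $l^{2/k}_{Q}$ and the condition $p\ge \frac2k$ suffices. For a flat $S_i$ one can only get a mixed $l^2l^\infty$ bound over the sparse cubes ($l^2$ transverse to $n_i$, $l^\infty$ along $n_i$, since $\calE_i f_i|_{x_i=c_{Q,i}}=\calE_i f_i|_{x_i=0}$ carries no decay in $x_i$), and one must still combine the $k_1$ genuine $l^2_Q$ bounds with the $k-k_1$ mixed bounds so that the product lands in $l^{2/(k-1)}_Q$; this is a Loomis--Whitney-type combinatorial step (the paper runs a simplified version of the argument of Lemma \ref{Le3}, using that the normals $n_{k_1+1},\dots,n_k$ are orthogonal coordinate directions after the linear change of variables of Section \ref{sbr}), and it is needed for every flat factor and every $0\le k_1<k$, not only in the extreme case $k_1=0$ that you single out. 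Relatedly, your concern that the base estimate "has to be supplied independently" when $k_1=0$ is misplaced: the hypothesis $\mathcal{R}^*(2\times\cdots\times2\to p,\epsilon)$ is assumed in (ii) for all $k_1$, so no separate input is required; the only thing flatness changes is the summation over the sparse family, which is exactly the Loomis--Whitney step you need to supply.
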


The noticeable difference here is that we increased the threshold for $p$ from $\frac2k$ to $\frac2{k-1}$. This is not unnatural, given that if one of the hypersurfaces is a subset of a hyperplane, then the generic estimate in $L^{\frac2{k-1}}$ is the best that can be expected. 

When comparing the result above with the ones described earlier for the generic case, we see that our result is more general than the one of Bourgain-Guth \cite{BoGu} given that we do not impose any non-degeneracy conditions on the curvature. But it is not as strong as Tao's result in \cite{Tao-new} given that we restrict ourselves to finite type or analytic setting, while Tao's result requires only a finite amount of differentiability. We note however that the arguments in this paper are significantly simpler than the ones employed by Tao in \cite{Tao-new}; but we should also note that our arguments rely heavily on the work of Tao \cite{Tao-BR} where the $\epsilon$-removal argument is carried in the linear setup.

The second result of this paper is about the generic multilinear restriction estimate in the case when appropriate localization properties occur. 
In a nutshell, the localization properties are in small neighborhoods of submanifolds some of the $S_i$; first we need to make this rigorous. 
The localization property can be stated in terms of a parametrization, which corresponds to a localization property for $f$ used in $\calE f$, or in terms 
of the intrinsic formulation, which corresponds to a localization property for $g$ used in $\tilde \calE g$. We will state both and point out that they are in some sense equivalent.

By a submanifold $S_i'$ of $S_i$ we mean $S_i'=\Sigma_i(M_i)$ where $M_i \subset U_i \subset \R^{n-1}$ is a submanifold in $\R^{n-1}$ that is given by $\Sigma_i': U_i'  \rightarrow U_i$, where $U_i'$ is a bounded, open, connected neighborhood of the origin in $\R^{m_i}$ and $\Sigma_i'$ is smooth in the sense of \eqref{smooth}; $m_i$ is the dimension of the submanifold $S_i'$. For $0 < \epsilon \ll 1$, the $\epsilon$-neighborhood of $M_i$ is defined by $B_\epsilon(M_i)= \cup_{\xi \in M_i} B_\epsilon(\xi)$.
We say that $f$ is supported in the $\epsilon$-neighborhood of $M_i$ if it is supported in $B_\epsilon(M_i)$.

With $S_i'=\Sigma_i(M_i)$, we define the $\epsilon$-neighborhood of $S_i'$ in $\R^{n}$ in a similar manner $B_\epsilon(S_i')= \cup_{\xi \in S_i'} B_\epsilon(\xi)$. We say that $g:S_i \rightarrow \C$ is supported in the $\epsilon$-neighborhood of $S_i'$
if it is supported in $B_\epsilon(S_i') \cap S_i$. 

In what follows by $N_\zeta$ we denote the normal space to the corresponding submanifold within $\R^n$: $N_\zeta S_i'$ is the normal space to $S_i' \subset \R^n$ at $\xi \in S_i'$.

Recalling that the two formulations of the restriction operator are related via $f(\xi)= g(\Sigma(\xi)) |\frac{\partial \Sigma}{\partial_{\xi_1}} \wedge ... \wedge \frac{\partial \Sigma}{\partial_{\xi_{n-1}}}|$, and using that $\Sigma$ is a local diffeomorphism, it follows that a localization of $f$ in the $\epsilon_1$-neighborhood of $M_i$ gives a localization of $g$ in the $\epsilon_2$-neighborhood of $S_i'=\Sigma (M_i)$ and vice versa, where $\epsilon_1 \approx \epsilon_2$. 

We introduce one more notation. If $V_1,..,V_k$ are $d_k$-dimensional planes, then by $|V_1 \wedge ... \wedge V_k|$ we mean the quantity
$|v_{1,1} \wedge ... v_{1,d_1} \wedge ... \wedge v_{k,1} \wedge ... \wedge v_{k,d_k}|$ where $v_{i,1},..,v_{i,d_i}$ is an orthonormal basis in $V_i, i=1,..,k$; it is easily seen that the defined quantity is independent of the choices of orthonormal systems.

The small support condition is the following:

\begin{con} \label{C} Assume that we are given submanifolds $S_i' \subset S_i, i =1,..,k$, of codimension $c_{i}$,
with the property that there exists $\nu >0$ such that
\begin{equation} \label{normal6}
| N_{\zeta_1} S_1' \wedge  .. \wedge N_{\zeta_k} S_k' | \geq \nu
\end{equation}
for all choices $\zeta_i \in S_i'$. Given $g_i \in L^2(S_i), \forall i =1,..,k$, we assume that 
 $supp g_i \subset  B_{\mu_i} (S_i') \cap S_i$, where $0 < \mu_i \ll 1$.
\end{con}
 
We make the observation that the codimension of $S_i'$ is relative to $S_i$, thus the codimension of $S_i'$ in $\R^n$ is $c_i+1$. In particular, the normal space $N_{\zeta_i} S_i'$ has dimension $c_i+1$. As a consequence, the total number of directions in which localization is provided cannot exceed $n-k$, that is $c_1+..+c_k \leq n-k$; this follows from \eqref{normal6} and the fact that the dimension of each $N_{\zeta_i} S_i'$ is $c_i+1$. 
We also note that if $c_i=0$, then the localization property is vacuous and we simply work with $S_i'=S_i$; at the same time $N_{\zeta_i} S_i$
is just the normal to $S_i$ at $\zeta_i$. 

If the assumptions above are imposed in the generic multilinear estimate, we obtain the following result.
\begin{theo} \label{MB}
Assume $S_i, i=1,..,k$ are smooth. In addition, assume that $g_1,..,g_k$ satisfy Condition \ref{C}.
Then for any $\epsilon > 0$, there is $C(\epsilon)$ such that for every $R>0$
the following holds true
\begin{equation} \label{Lf}
\| \Pi_{i=1}^{k} \tilde \calE_i g_i \|_{L^\frac{2}{k-1}(B(0,R))} \leq C(\epsilon) \Pi_{i=1}^k \mu_i^{\frac{c_i}2} R^\epsilon \Pi_{i=1}^{k} \| g_i \|_{L^2(S_i)}.
\end{equation}
\end{theo}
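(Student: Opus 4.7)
The plan is to extend the direct wave-packet proof of the Bennett--Carbery--Tao estimate given in \cite{Be1} --- which bypasses the multilinear Kakeya inequality --- so as to incorporate the localization imposed by Condition \ref{C}. First, set up the wave-packet decomposition of each $g_i$ at scale $R^{-1/2}$: partition $S_i$ into caps $\tau_i$ of diameter $R^{-1/2}$ and write $g_i = \sum_{\tau_i} g_{i,\tau_i}$, so that each $\tilde \calE_i g_{i,\tau_i}$ is (up to rapidly decaying tails) concentrated on a tube $T_{\tau_i}$ of cross-section $R^{1/2}$ and length $R$ oriented along the normal to $S_i$ at the center of $\tau_i$. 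The hypothesis $\text{supp}\, g_i \subset B_{\mu_i}(S_i') \cap S_i$ forces all active caps to meet the $\mu_i$-neighborhood of $S_i'$; their number is therefore bounded by $N_i \lesssim \mu_i^{c_i} R^{(n-1)/2}$, a factor of $\mu_i^{c_i}$ smaller than the full cap count $R^{(n-1)/2}$ used in the unlocalized argument. Correspondingly, the tube directions lie in an $O(\mu_i)$-neighborhood of the $(n-1-c_i)$-dimensional submanifold $\{N_\zeta S_i : \zeta \in S_i'\}$ of the unit sphere, rather than filling the full sphere.

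Next, rerun the incidence / induction-on-scales argument of \cite{Be1} with these restricted wave-packet families. The transversality condition \eqref{normal6} is the crucial geometric input: since $N_\zeta S_i \subset N_\zeta S_i'$ for every $\zeta \in S_i'$, the lower bound $|N_{\zeta_1} S_1' \wedge \cdots \wedge N_{\zeta_k} S_k'| \ge \nu$ implies a uniform transversality (depending on $\nu$) of the active wave-packet directions across all $k$ families, even though these directions are confined to lower-dimensional submanifolds of the sphere; this is exactly what is needed to run the multilinear Kakeya-type tube incidence step in \cite{Be1}. Tracking the reduced counts $N_i$ through the argument in place of the full counts $R^{(n-1)/2}$, one obtains an $L^{2/(k-1)}(B_R)$ bound with the claimed gain $\prod_i \mu_i^{c_i/2}$ over the unlocalized Bennett--Carbery--Tao estimate, the exponent $c_i/2$ arising as the square root of the reduced cap fraction $\mu_i^{c_i}$.

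The main technical obstacle is verifying that the combinatorial step of \cite{Be1} translates the factor $\mu_i^{c_i}$ in the reduced cap count into exactly $\mu_i^{c_i/2}$ in the final bound, with no other spurious powers of $\mu_i$. This requires that the wave-packet tube incidence estimate driving \cite{Be1} remain intact when the wave-packet directions for each family are restricted to an $(n-1-c_i)$-dimensional submanifold of the sphere, and that the reduced cap counts propagate through the induction on $R$ without introducing extra losses. A secondary subtlety is ensuring that the transversality constant $\nu$ and the smoothness constants of the $S_i$ enter the final constant $C(\epsilon)$ in a $\mu_i$-independent way, so that the clean factorization in \eqref{Lf} is preserved.
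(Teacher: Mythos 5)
Your approach is genuinely different from the paper's, and it has a gap at its core step. The paper does not use an $R^{1/2}$ wave-packet decomposition at all: it runs a cube-based induction on scales (from cubes of size $R$ to cubes of size $\delta^{-1}R$), where the strengthened induction hypothesis \eqref{INS} carries spatial off-diagonal weights, the scales are glued by a refined discrete Loomis--Whitney inequality (Lemma \ref{Le3}) with an $l^2l^\infty(\mathcal{L}(\calH_i')\times\mathcal{L}(\calH_i''))$ structure on each factor, and the localization gain enters through the $L^2$-concentration estimate \eqref{keyloc}, i.e.\ $\|\chi_{q'}\mathcal{F}_i^{-1}f_i\|_{L^2(q)}\lesssim (R\mu_i+10\delta)^{c_i/2}\|f_i\|_{L^2}$ for cubes $q$ of side $R$; these per-step gains telescope over the induction to $\mu_i^{c_i/2}$.

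The gap in your argument is the assertion that "tracking the reduced counts $N_i$ through the argument in place of $R^{(n-1)/2}$" produces the factor $\mu_i^{c_i/2}$. The Bennett--Carbery--Tao/\cite{Be1} machinery bounds the output by $\Pi_i\|g_i\|_{L^2}$ and is insensitive to how that $L^2$ mass is distributed among caps: after normalizing $\|g_i\|_{L^2}=1$, reducing the cap count to $N_i$ only raises the mass per cap to $\|g_{i,\tau}\|_{L^2}\sim N_i^{-1/2}$ and hence the amplitude per tube to $\sim R^{-(n-1)/4}N_i^{-1/2}$, while the multilinear Kakeya input is linear in each tube count; the $N_i$'s cancel exactly and one recovers $R^\epsilon\Pi_i\|g_i\|_{L^2}$ with no $\mu$-gain. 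Nor can the gain come from the directional confinement alone: the multilinear Kakeya bound $\sum_{T_1,\dots,T_k}|T_1\cap\cdots\cap T_k|\lesssim_\nu \Pi_i\#\mathbb{T}_i$ is saturated by tubes through a common point however the directions are distributed within each transversal family. What is actually needed is a refinement that exploits the \emph{joint} transversality of the $(c_i+1)$-dimensional normal spaces $N_{\zeta_i}S_i'$ across the $k$ families --- the full content of \eqref{normal6}, which you invoke only to deduce the ordinary transversality \eqref{trans} of the tube directions. Without that joint transversality the gains from different factors do not multiply and \eqref{Lf} is false, so any correct proof must use it quantitatively; in the paper it enters through the orthogonalization of the spaces $N_i$ in Section \ref{sbr} and the resulting $l^2l^\infty$ splitting in Lemma \ref{Le3}. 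A secondary omission: when $R\mu_i\le 1$ your cap count is $R^{(n-1-c_i)/2}$ and the "square root of the cap fraction" heuristic yields only $R^{-c_i/4}$, short of $\mu_i^{c_i/2}$; the paper needs a separate argument (the end of Section \ref{MBP}) to recover the missing factor $(R\mu_i)^{c_i/2}$ in that regime.
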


A more restrictive version of the above result was established in \cite{Be1}. One of the limitations there was that only one factor had localization properties.
But the more important limitation is that the localization in \cite{Be1} was around a flat submanifold (when using a specific projection-type parametrization). 
 The more general statement in Theorem \ref{MB} will play a crucial role in a forthcoming paper of the author regarding the multilinear restriction estimate when curvature properties are taken into consideration; it is also likely that this new result will be useful in other applications.

The paper is organized as follows. In section \ref{sbr} we introduce our most common notations, recall some basic results and perform some reductions which will be useful in the latter sections. In the following two sections we prove Theorems  \ref{epsr} and \ref{MB}, respectively. 

In proving Theorem \ref{epsr} we adapt to the multilinear setup the arguments used by Tao in \cite{Tao-BR}, where an $\epsilon$-removal argument has been carried out in the linear case. In addition, the analysis involves tracking the directions in which each wave $\calE_i f_i$ propagates and the components that are essential in the application of the near-optimal multilinear restriction estimate - this is very important in the final summation process. 

In proving Theorem \ref{MB}, we use an induction on scale argument, similar to the one from \cite{Be1} used for proving a weaker version of this 
result. Dealing with the localization properties during the induction process requires a complete change in the approach (relative to \cite{Be1}) that is more robust and general.  

\section{Notation, basic results and reductions} \label{sbr}

In this section we introduce some notation and some reductions of the setup described in the introduction with the aim of simplifying the level of technicalities involved in our arguments. 

If $v_1,..,v_m$ are vectors in $\R^n$, by $span(v_1,..,v_m)$ we mean the subspace of $\R^n$ spanned by $v_1,..,v_m$. 
We use the standard notation $A \lesssim B$, meaning $A \leq C B$ for some universal $C$ which is independent of variables used in this paper; in particular
it will be independent of $\delta$ and $R$ that appear in the main proof. By $A \lesssim_N B$ we mean $A \leq C(N) B$ and indicate that $C$ depends on $N$.  

Our results involve estimates in  $L^p(S), S \subset \R^n$ with $0 < p \leq 1$.
We recall the standard estimate for superpositions of functions in $L^p$ for $0 < p \leq 1$:
\begin{equation} \label{triangle}
\| \sum_\alpha f_\alpha \|_{L^p}^p \leq \sum_\alpha \| f_\alpha \|_{L^p}^p. 
\end{equation}
 
We make the convention that the hypersurfaces involved have very small diameter in the sense that if $S=\Sigma(U)$ is a parametrization then $U \subset \R^{n-1}$ has small diameter in the classical sense.  This assumption does not affect our analysis: we can break each hypersurface in finitely many pieces 
of small diameter, run the arguments with the above setup and then sum up the result on these pieces.

Since the estimates are not affected by translating the hypersurfaces $S_i$, we can assume that they all pass through the origin,
and moreover that $0 \in S_i'$ for all $i=1,..,k$. For each $i=1,..,k$, we let $N_i=N_{0} S_i$ be the $c_i$-plane that is normal to $S_i$ at $0$. 
$N_i$ is transversal to $S_i'$ at $0$ and because $U_i'$ has small diameter, $N_i$ is transversal to $S_i'$ at every $\zeta_i \in S_i'$.

Next we proceed with reducing the problem to the case when the hypersurfaces/submanifolds involved have appropriate parameterizations; 
we do this for the more complicated setup of Theorem \ref{MB},
and note that a simplified version covers the one needed in Theorem \ref{epsr}. 

From \eqref{normal6}, we know that 
\begin{equation} \label{trans0}
|N_1 \wedge .. \wedge N_k | \geq \nu. 
\end{equation}

Our next goal is to prove that there exists a non-degenerate linear transformation $A: \R^n \rightarrow \R^n$ that allows us 
to assume that, under this transformation of the $\xi$-space, the system $N_1,..,N_k$ is an orthogonal system in the sense that if 
$v_i \in N_i, v_j \in N_j$ with $i \ne j$, then $v_i \perp v_j$.

$S_i'$ is transformed into $AS_i'$, $T_0 S_i'$ is transformed into $A T_0 S_i'$, but $AN_i$ is not necessarily the normal to $AS_i'$ at $\xi=0 \in S_i'$! Indeed, the normal condition is $\la n, v \ra=0, \forall v \in T_0 S_i, n \in N_i$ and there is no guarantee that $\la A n, A v \ra=0, \forall v \in T_0 S_i, n \in N_i$ (unless $A$ is orthonormal transformation which would not solve the problem we seek to fix).
If we wanted $e_{i,1},...,e_{i,c_i+1}$ to be the normal vectors to $AS_i$, then we need to impose $\la e_{i,j}, A v \ra=0, \forall v \in T_0 S_i$ which is equivalent to $\la A^*  e_{i,j}, v \ra=0, \forall v \in T_0 S_i$, therefore if $n_{i,j}$ was an orthonormal base for $N_i$ it suffices to impose $A^*  e_{i,j}=n_{i,j}, \forall i=1,..,k, \forall j=1,..,c_i+1$. Given an orthonormal system $\{e_{i,j}\}_{i=1,..,k; j=1,..,c_i+1}$, we can solve for $A$
satisfying these equations; from \eqref{trans0}, it follows that we can choose $A$ to obey good bounds, that is $\| A \| + \| A^{-1} \| \lesssim \nu^{-1}$.

The effect of $A$ on the other properties in Condition \ref{C} is fairly simple: $S_i'$ becomes $AS_i'$ which is a submanifold of $AS_i$,
and the effect on the neighborhood can be quantified as follows:
\[
A B_\epsilon(S_i') \subset B_{c \epsilon} (AS_i'),
\] 
for some $c$ depending on $A$.

The change of coordinates induced by $A$ will be reflected in the norms $\| g_i \|_{L^2(S_i)}$ and 
$\| \Pi_{i=1}^{k} \tilde \calE_i g_i \|_{L^\frac{2}{k-1}(B(0,R))}$ via the usual determinants of the corresponding Jacobian transformation 
- these values are both bounded from below and above in terms of $A$. Thus we can skip the use of $A$ and its related effects discussed above throughout the rest of the argument and simply work with the original objects with the additional assumption that the set of vectors $N_1,..,N_k$ is an orthogonal base in $R^n$.

For the purpose of Theorem \ref{epsr}, a similar but simpler reduction is in place, the only difference being that there $N_i$ is simply the $1$-dimensional normal space to $S_i$ at $0$, so we could work directly with (unit) vectors as opposed to spaces.

Next, we arrange a bit the variables in our problem. Each $N_i$ contains $n_i=n_i(0)$, the unit normal to $S_i$ at $0$. This particular vector plays an important role in the overall analysis, given that $S_i$ are the ambient hypersurfaces. Next in each $N_i, i=1,..,k$ we pick an orthonormal basis that contains $n_i$, that is $\{n_i\} \cup \{e^i_j\}_{j=1,..,c_i}$. If we relabel $\cup_{i=1}^k \{e^i_j\}_{j=1,..,c_i} = \{e_{k+1},..,e_l\}$, then we can further complete this system of vectors with unit vectors $e_{l+1}, ..,e_n$ with the property that the set of vectors $n_1,..,n_k,e_{k+1},..,e_{n}$ is an orthonormal system.  Then let $\xi=(\xi_1,..,\xi_n)$ where $\xi_i$ is the coordinate in the direction of $n_i$, $i=1,..,k$, and $\xi_i$ is the coordinate in the direction of $e_i$ for $i=k+1,..,n$; we also use the notation $\xi'_i=(\xi_1,.., \hat{\xi_i},..,\xi_n)$ (skip the $\xi_i$ coordinate). We let $x=(x_1,..,x_n)$ be the dual system of coordinate and $x'_i=(x_1,.., \hat{x_i},..,x_n)$.

For each $i=1,..,k$, we let $\calH_i=\{x \in \R^n: x_i=0\}$ be the hyperplane passing through the origin with normal $n_i$; we also let $\hat \calH_i=\{\xi \in \R^n: \xi_i=0\}$. We denote by $\pi_i:\R^n \rightarrow \calH_i$ the projection along $n_i$ onto $\calH_i$; similarly we denote by $\hat \pi_i:\R^n \rightarrow \hat \calH_i$ the projection along $n_i$ onto $\hat \calH_i$.

For technical purposes it is preferably to work with graph-like parametrizations of $S_i=\Sigma_i(U_i)$, that is with $\Sigma_i: U_i \subset \hat \calH_i \rightarrow \R^n$ being of the form $\Sigma_i(x)=\{ \xi \in \R^n: \xi_i=\varphi_i(\xi_i')\}, \forall \xi_i' \in U_i$ for some smooth injective map $\varphi_i : U_i \rightarrow \R$. Such parametrizations always exists under the assumption that the diameter of $U_i$ (coming from the original parametrization) is sufficiently small (which is the case): one can simply project $S_i$ onto $\calH_i \cong \R^{n-1}$ along $n_i$ to obtain $\Sigma_i^{-1} : S_i \rightarrow \hat \calH_i$. 

We let $f_i(\xi'_i)=g_i (\Sigma_i(\xi'_i)) \cdot \frac{d \sigma_{S_i}(\Sigma_i(\xi'_i))}{d \xi'_i}$; the relevance of $f_i$ comes from the fairly straightforward identity
\[
\tilde \calE_i g_i= \calE_i f_i.
\]
The above parametrization and newly constructed function $f_i$ have two additional properties. First, $\Sigma_i^{-1}(S_i')$ is a submanifold of $\R^{n-1}$ whose co-dimension equals the co-dimension of $S_i'$ relative to $S_i$.
Second, if $g_i$ has the properties in Condition \ref{C}, then $f_i$ is supported in $B_{c \epsilon} (\Sigma_i^{-1}(S_i'))$. 
In what follows we set $M_i=\Sigma_i^{-1}(S_i') \subset \hat \calH_i$ - this is the manifold whose specific neighborhood contains the support of $f_i$.

At one point in the argument we will be using more convenient versions of the $\epsilon$-neighborhood of a submanifold. As before, we can assume that $U_i'$ is of small diameter; then $S_i'$ has a graph type parametrization in the sense that, after an orthonormal change of coordinates, we have 
$\Sigma_i'(\xi')=(\xi',\varphi_{i,m_i+1} (\xi'),..,\varphi_{i,n-1}(\xi')), \xi'= (\xi_1,..,\xi_{m_i}) \in U_i'$. Thus a standard neighborhood
can be traded in this context for the simpler version $\tilde B_\epsilon (M_i)=(\xi',\varphi_{i,m_i+1} (\xi')+t_{m_i+1},..,\varphi_{i,n-1}(\xi')+t_{n-1}), \xi' \in U_i' + B_\epsilon(0),
|t_j| < \epsilon, \forall j=m_i+1,..,n-1$; here we are implicitly assuming the fact $\Sigma_i'$ extends in a neighborhood of size $\epsilon$ of $U_i'$.

The above setup reduction works the same for Theorem \ref{epsr} by simply ignoring the consideration involving the manifold $S_i'$ and its neighborhood.

With the above notations we have
\begin{equation} \label{E}
\calE_i f_i (x)= \int_{U_i} e^{i (x'_i \xi'_i + x_i \varphi_i(\xi'_i))} f_i(\xi'_i) d\xi'_i.
\end{equation}

The Fourier transform of a Schwartz function $f:\R^n \rightarrow \C$ is defined by
\[
\mathcal{F}(f)(\xi)=\hat f(\xi)= \int e^{-i x \cdot \xi} f(x) dx. 
\]
The inverse Fourier transform is defined by 
\[
\mathcal{F}^{-1}(g)(x)=\check g(x)= \frac1{(2\pi)^n} \int e^{i x \cdot \xi} g(\xi) d\xi.
\]
The standard Fourier inversion formula is in place $\mathcal{F}^{-1} \mathcal F f = f$ and $\mathcal{F} \mathcal F^{-1} g = g$. 
These definitions are then extended to distributions, in particular to $L^p(\R^n)$ spaces, in the usual manner. 

In a similar manner (just that we work in $n-1$ dimensions), we denote by $\mathcal{F}_j: \calH_j \rightarrow \hat \calH_j$ the Fourier transform, $x_j' \rightarrow \xi_j'$, and by $\mathcal{F}_j^{-1}$ the inverse Fourier transform,  $\xi_j' \rightarrow x_j'$.  Obviously, $\mathcal{F}_j, \mathcal{F}_j^{-1}$ act on the variables $x'_j,\xi'_j$ respectively.  
 
With these notations in place we make a few more observations. We denote by $\calE_j f_j(x_j',0)=\calE_j f_j|_{x_j=0}$ and note that
\begin{equation} \label{EFF}
\calE_j f_j(x_j',0) = (2\pi)^{n-1} \mathcal{F}_j^{-1} f_j. 
\end{equation}
With the parametrizations introduced earlier for each $S_i$, it follows that
\begin{equation} \label{calei}
\calE_j f_j = e^{i x_j \varphi_j(D'_j)} \check{f}_j,
\end{equation}
where the symbol of $e^{i x_j \varphi_j(D'_j)}$ is $e^{ix_j \varphi_{j}(\xi_j')}$. 

Another observation is that the operator $\calE_jf_j(\cdot,x_j): L^2(\calH_j) \rightarrow L^2(\calH_j)$ is an $L^2$ isometry with respect to $x_j$, that is
\begin{equation} \label{mc}
\| \calE_j f_j(\cdot,x_j) \|_{L^2}= \| \calE_j f_j(\cdot,0) \|_{L^2}=(2\pi)^{\frac{n-1}2} \|f_j\|_{L^2}, \quad \forall x_j \in \R.
\end{equation}
   
For the operators $\calE_j$ we highlight a commutator estimate which will be used in our proof.   We define the operator $\nabla \varphi_j(\frac{D'}i)$ to be multiplier with symbol $\nabla \varphi_j(\xi')$. 
 For any $x \in \R^{n}$ and $c \in \calH_j$, it holds true that
\begin{equation} \label{com}
(x'_j-c-x_{j} \nabla \varphi_j(\frac{D'}i))^N \calE_j f (x)=  \calE_j (\mathcal{F}_j( (x'_j-c)^N  \mathcal{F}_j^{-1} f)) (x), \quad \forall N \in \N. 
\end{equation}

Next we prepare some geometric elements that are needed in the proof. We let $\mathcal{L}:=\{ z_1 n_1 + ... +z_{k} n_{k} + z_{k+1} e_{k+1} + ..+z_n e_n:(z_1,..,z_{n}) \in \Z^{n} \}$ be the standard lattice in $\R^{n}$ generated by the vectors $n_1,..,n_{k},e_{k+1},..,e_n$. In each $\calH_i, i=1,..,k$ we construct the induced lattice $\mathcal{L}(\calH_i)=\pi_{i} (\mathcal{L})$. Recall that for each $i=1,..,k$, it holds true that 
$\{ n_1,..,n_k,e_{k+1},..,e_l\} \setminus \{n_i\} \subset \calH_i$. We split $\calH_i = \calH_i'  \oplus \calH_i''$, where $\calH_i''$ is spanned by the set of vectors $\{ e^i_1,..,e^i_{c_i} \}$, while $\calH_i'$ is spanned by $\{ n_1,..,n_k,e_{k+1},..,e_l\} \setminus \{N_i, e^i_1,..,e^i_{c_i} \}$. Correspondingly,
this produces a split of $\mathcal{L}(\calH_i)$ as follows  $\mathcal{L}(\calH_i)=\mathcal{L}(\calH_i') \oplus \mathcal{L}(\calH_i'')$.

Given $r > 0$ we define $\calC(r)$ be the set of of cubes of size $r$ in $\R^{n}$ that are centered at points in the lattice $r \mathcal{L}$; a cube in $\calC(r)$ has the following form $q(\bj): = [r (j_1-\frac12), r(j_1+\frac12)] \times .. \times [r (j_{n}-\frac12), r(j_{n}+\frac12)]$
where $\bj=(j_1,..,j_{n}) \in \Z^{n}$. For such a cube we define $c(q)=r \bj=(r j_1,.., r j_{n}) \in r\mathcal{L}$ to be its center; 
vice-versa, if $c \in r\mathcal{L}$ then we define $q(c)$ to be the cube whose center $c(q)=c$. 
Then, for each $i=1,..,n$, we let $\calC \calH_i(r) =\pi_{i} \calC(r)$ be the set of cubes of size $r$ in the hyperplane $\calH_i$.
Finally, given two cubes $q,q' \in \calC(r)$ (or $\calC \calH_i(r)$ or $\calC \calH'_i(r), \calC \calH''_i(r)$)  we define
$d(q,q')$ to be the distance between them when considered as subsets of the underlying space, be it $\R^{n}$ or $\calH_i$ or $\calH_i'/\calH_i''$.

Let $\chi_0^n:\R^n \rightarrow [0,+\infty)$ be a Schwartz function, normalized in $L^1$, that is $\| \chi_0^n \|_{L^1}=1$,
and with Fourier transform supported on the unit ball. For each $q \in \calC(r)$, define
$\chi_q: \R^n \rightarrow \R$ by
\[
\chi_{q}(x) = \chi_0^n (\frac{x-c(q)}r)
\]
Notice that $\hat \chi_{q}$ is supported in $B(0,r^{-1})$. By the Poisson summation formula and the properties of $\chi_0^n$, 
\begin{equation} \label{pois}
\sum_{q \in \calC(r)} \chi_{q}=1.
\end{equation}
Using the properties of $\chi_q$, a direct exercise shows that for each $N \in \N$, the following holds true
\begin{equation} \label{SN}
\sum_{q \in \calC(r)}  \| \la \frac{x-c(q)}r \ra^{N} \chi_{q} g \|_{L^2}^2 \lesssim_N \| g \|_{L^2}^2
\end{equation}
for any $g \in L^2(\R^n)$.

We introduce similar entities in each $\calH_i, i=1,..k$.  For each $q \in \calC \calH_i(r)$, define
$\chi_q: \calH_i \rightarrow \R$ by $\chi_{q}(x) = \chi_0^{n-1} (\frac{x-c(q)}r)$ and note that $\mathcal{F}_i \chi_{q}$ is supported in $B(0,r^{-1})$. 
By the Poisson summation formula and the properties of $\chi_0^{n-1}$, 
\begin{equation} \label{poisl}
\sum_{q \in \calC \calH_i(r)} \chi_{q}=1.
\end{equation}
In a similar way to \eqref{SN}, the following holds true
\begin{equation} \label{SNl}
\sum_{q \in \calC \calH_i(r)}  \| \la \frac{x-c(q)}r \ra^{N} \chi_{q} g \|_{L^2}^2 \lesssim_N \| g \|_{L^2}^2
\end{equation}
for any $g \in L^2(\calH_i)$. Here, the variable $x$ is the argument of $g$ and belongs to $\calH_i$.

We recall from \cite{Be1} the following discrete version of the continuous Loomis-Whitney inequality:
\begin{equation} \label{LWd}
\| \Pi_{i=1}^{k} g_i(\pi_{i}(z)) \|_{l^{\frac2{k-1}}(\mathcal{L})} \ls \Pi_{i=1}^{k} \| g_i \|_{l^2(\mathcal{L}(\calH_i))}. 
\end{equation}
We need a further refinement of this estimate.

For a function $g : \mathcal{L}(\calH_i) \rightarrow \C$, we define the space $l^2l^\infty(\mathcal{L}(\calH_i') \times \mathcal{L} (\calH_i''))$ to be the space of functions 
whose norm
\[
\| g \|_{l^2l^\infty(\mathcal{L}(\calH_i') \times \mathcal{L} (\calH_i''))} 
= \left( \sum_{z' \in \mathcal{L}(\calH_i')} \sup_{z'' \in \mathcal{L}(\calH_i'')} | g(z',z'')|^2 \right)^\frac12, 
\]
is finite.  With this notation in place we have the following result:

\begin{lema} \label{Le3}
Assume $g_i \in l^2l^\infty(\mathcal{L}(\calH_i') \times \mathcal{L} (\calH_i'')), i=1,..,k$. Then the following holds true
\begin{equation} \label{LWd3}
\| \Pi_{i=1}^{k} g_i(\pi_{i}(z)) \|_{l^{\frac2{k-1}}(\mathcal{L})} \ls 
 \Pi_{i=1}^{k} \| g_i \|_{l^2l^\infty(\mathcal{L}(\calH_i') \times \mathcal{L} (\calH_i''))}. 
\end{equation}
\end{lema}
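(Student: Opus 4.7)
The plan is a two-step reduction. I first replace each $g_i$ by its pointwise supremum over the $\calH_i''$ direction, and then prove the resulting higher-codimension discrete Loomis--Whitney-type inequality by iterating \eqref{LWd}.

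For Step 1, I set $\tilde g_i : \mathcal{L}(\calH_i') \to [0,\infty)$, $\tilde g_i(z') := \sup_{z''\in \mathcal{L}(\calH_i'')} |g_i(z',z'')|$. Then $|g_i(\pi_i(z))| \le \tilde g_i(\pi_i(z)|_{\calH_i'})$ pointwise in $z\in \mathcal{L}$ and $\|\tilde g_i\|_{l^2(\mathcal{L}(\calH_i'))} = \|g_i\|_{l^2l^\infty}$, so the lemma reduces to
\[
\Bigl\| \prod_{i=1}^k \tilde g_i(\pi_i(z)|_{\calH_i'}) \Bigr\|_{l^{2/(k-1)}(\mathcal{L})} \lesssim \prod_{i=1}^k \|\tilde g_i\|_{l^2(\mathcal{L}(\calH_i'))}.
\]

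For Step 2 I work in the coordinates $(z_1,\ldots,z_n)$ of the paper's setup and use the partition $\{k+1,\ldots,l\} = \sqcup_{i=1}^k I_i$ of the $e$-directions into the bases of the $\calH_i''$'s. Then $\tilde g_i(\pi_i(z)|_{\calH_i'})$ depends exactly on the coordinates indexed by $\{1,\ldots,n\}\setminus(\{i\}\cup I_i)$, so each index in $\{1,\ldots,l\}$ is absent from exactly one factor and each index in $\{l+1,\ldots,n\}$ is present in every factor. I would first apply \eqref{LWd} to the innermost sum over $(z_1,\ldots,z_k)$ with the other coordinates frozen, producing the bound $\prod_i h_i^{2/(k-1)}$ with
\[
h_i(z_{k+1},\ldots,z_n) := \Bigl(\sum_{(z_1,\ldots,\hat z_i,\ldots,z_k)} \tilde g_i^2 \Bigr)^{1/2};
\]
note that $h_i$ is independent of the block of coordinates indexed by $I_i$. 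The remaining sums I then dispatch in two passes: for each $I_{i_0}$-block I apply \eqref{LWd} again at the block level (treating $I_{i_0}$ as a single super-coordinate missing from $h_{i_0}$, which is a consequence of \eqref{LWd} after coordinate grouping); for the common coordinates $z_{l+1},\ldots,z_n$ I use H\"older with exponents $(2,2,\infty,\ldots,\infty)$ together with the elementary inequality $\|A\|_{l^\infty(\Z^m)} \le \|A\|_{l^2(\Z^m)}$ valid for nonnegative $A \in l^2$. Tracking the resulting $l^2$-norms reassembles the right-hand side above.

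I expect the main obstacle to be the bookkeeping in Step 2, in particular invoking \eqref{LWd} at the block level where the projections erase $(c_i+1)$-dimensional subspaces rather than single coordinates. In the special case $c_1=\cdots=c_k=0$ the argument collapses to \eqref{LWd} itself, and when some $c_i>0$ the block structure has to be unwound carefully so that the $l^2(\calH_i')$-norms on the right side emerge with the correct exponents.
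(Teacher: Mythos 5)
Your argument is essentially the paper's: the paper handles the $\calH_i''$-blocks by a mixed-norm H\"older over $z''$ (using that $g_i$ is measured in $l^\infty$ exactly on its own block $I_i$ and in $l^2$ on the others), then applies \eqref{LWd} in $z'$ and H\"older plus the embedding $l^{2/k}\hookrightarrow l^{2/(k-1)}$ in $z'''$; your sup-reduction followed by coordinate-level and block-level Loomis--Whitney performs the same accounting in a slightly different packaging, and both routes are correct. The one step that fails as written is the final H\"older in $z'''$: with exponents $(2,2,\infty,\ldots,\infty)$ you land in $l^1_{z'''}$, and since $\tfrac{2}{k-1}<1$ for $k\ge 4$ the $l^1$ norm does \emph{not} dominate the $l^{2/(k-1)}$ quasi-norm. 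Replace this with exponents $(2,\ldots,2)$ (all $k$ equal to $2$), which gives $l^{2/k}_{z'''}$, followed by $\|\cdot\|_{l^{2/(k-1)}}\le\|\cdot\|_{l^{2/k}}$ --- exactly the paper's last step --- or use $k-1$ exponents equal to $2$ and a single $\infty$ together with $\|A\|_{l^\infty}\le\|A\|_{l^2}$.
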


\begin{proof} For $z \in \mathcal{L}$ we write $z=(z',z'',z''')$ where $z'=(z_1,..,z_{k})$ collects the coordinates in the directions of $n_1,..,n_k$,
 $z''$ collects the coordinates in the directions of $e_{k+1},..,e_{l}$ and $z'''$ collects the coordinates in the directions of $e_{l+1},..,e_{n}$. 
 In this proof, we need to further refine the latices $\calH_i$. We let $\tilde \calH_i''$ be the projection of $\calH_i$ onto the subspace generated by the vectors $\{e_{k+1},..,e_l\} \setminus \{e^i_1,..,e^i_{c_i}\}$. 
 
 We fix $z'$ and $z'''$, let $ \{z' \} \times \mathcal{L}'' \times \{z'''\}$ be the  sub-lattice of $\mathcal{L}$ obtained by fixing $z'$ and $z'''$.
  Then we use H\"older's inequality to obtain
\[
\|  \Pi_{i=1}^{k} g_i(\pi_{i}( z', \cdot, z''' )) \|_{l^{\frac2{k-1}}(\{z' \} \times \mathcal{L}'' \times \{z'''\})} \ls 
\Pi_{i=1}^{k} \| g_i(\pi_{i}(z',\cdot,z''')) \|_{l^2l^\infty(\mathcal{L}(\tilde \calH_i'') \times \mathcal{L}(\calH_i''))}.
\]
In justifying this estimate, we have also used the fact that $l^2l^\infty(\mathcal{L}(\tilde \calH_i'') \times \mathcal{L}(\calH_i''))$ is the strongest 
norm that combines an $l^2$ norm in the variables from $\mathcal{L}(\tilde \calH_i'') $ and an $l^\infty$ norm in the variables from $ \mathcal{L}(\calH_i''))$; 
for instance it controls $l^\infty l^2( \mathcal{L}(\calH_i'') \times \mathcal{L}( \tilde \calH_i''))$. 

Then we apply \eqref{LWd} with respect to the variable $z'$ to obtain
\[
\|  \Pi_{i=1}^{k} g_i(\pi_{i}( \cdot, \cdot, z''' )) \|_{l^{\frac2{k-1}}(\mathcal{L}' \times \mathcal{L}'' \times \{z'''\})} \ls 
\Pi_{i=1}^{k} \| g_i(\pi_{i}(z',\cdot,z''')) \|_{l^2l^\infty(\mathcal{L}(\tilde \calH_i'') \times \mathcal{L}(\calH_i''))}.
\]
Applying H\"older with respect to the $z'''$ variable gives
\[
\|  \Pi_{i=1}^{k} g_i \circ \pi_{i} \|_{l^{\frac2{k}}_{z'''} l^{\frac2{k-1}}_{z',z''}} \ls 
\Pi_{i=1}^{k} \| g_i \circ \pi_{i} \|_{l^2l^\infty(\mathcal{L}(\calH_i') \times \mathcal{L} (\calH_i''))}.
\]
To conclude with \eqref{LWd3} we only need to use the simple inequality $\| a \|_{l^{\frac2{k-1}}} \ls \| a \|_{l^{\frac2{k}}}$ for any sequence $a \in \Z^m$
and in any dimension $m$.

 \end{proof}

\section{The $\epsilon$-removal result} \label{epsrl}
In this Section we provide a proof of Theorem \ref{epsr}, a theorem  that provides an $\epsilon$-removal technique for multilinear estimates. The closest type of result in this direction is the work of Tao and Vargas in \cite{TV-CM1} in the context of bilinear restriction estimates. Unfortunately we cannot adapt that approach to our context, since, among other issues, it relies on duality type arguments; in our particular context we work with $L^p$ quasi-norms with $p<1$ and use of duality is not an option. Instead we follow a previous $\epsilon$-removal argument of Tao in \cite{Tao-BR} in a linear context and use techniques developed by the author in see \cite{Be1}
to adapt it in the context of multilinear estimates. 

Our first result is of a technical nature and its use will become clear once we proceed with the proof of Theorem \ref{epsr}. The basic idea is that we need to use the near-optimal result 
\[
\| \Pi_{i=1}^k \calE_i f_i \|_{L^p(B_R)} \lesssim R^\epsilon \Pi_{i=1}^k \| f_i \|_{L^2}
\]
in a more optimal form that essentially collects the mass of each wave $\calE_i f_i$ from $B_R(x_i)$. The starting point is that the mass of $f_i$
can be recovered in multiple ways based on \eqref{mc}; as a consequence we can rewrite the above as
\[
\| \Pi_{i=1}^k \calE_i f_i \|_{L^p(B_R)} \lesssim R^\epsilon \Pi_{i=1}^k \| \calE_i f_i|_{x_i=c_{Q,i}} \|_{L^2(\calH_i)}. 
\]
On the other hand, information coming from cubes $q \times \{x_i=c_{Q,i}\}, q \in \calC \calH_i(R)$ that are at distance $\gg R$ from $\pi_i Q \times \{x_i=c_{Q,i}\}, q \in \calC \calH_i(R)$ should not impact much the estimate, due to the finite speed of propagation. The following result encodes this heuristics. 

\begin{lema}
Let $Q \in \calC(R)$. Then the following holds true
\begin{equation} \label{lEF}
\| \Pi_{i=1}^k \calE_i f_i \|_{L^p(Q)} \lesssim R^\epsilon \Pi_{i=1}^k \left( \sum_{q \in \calC \calH_i (R)} \la \frac{d(\pi_{i} Q, q)}R \ra^{-(2N-n^2)} \| \la \frac{x_i'-c(q)}R \ra^{N} \chi_{q} \calE_i f_i|_{x_i=c_{Q,i}} \|^2_{L^2(q)} \right)^\frac12. 
\end{equation}
\end{lema}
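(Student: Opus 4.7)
The plan is to partition each $\calE_i f_i$ at the time-slice $x_i = c_{Q,i}$ using \eqref{poisl}, feed the resulting weighted waves into the hypothesis $\calR^*(2\times\ldots\times 2\to p,\epsilon)$, convert the operator-valued weight produced by the commutator identity \eqref{com} into a pointwise decay factor in $d(\pi_i Q,q_i)$, and sum via \eqref{triangle} and H\"older.

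\emph{Decomposition and weighted waves.} Using \eqref{poisl} at the slice $x_i=c_{Q,i}$ and propagating by the unitary evolution in \eqref{calei}, I write $\calE_i f_i(x)=\sum_{q_i\in\calC\calH_i(R)}\tilde u_{i,q_i}(x)$ with $\tilde u_{i,q_i}(x)=e^{i(x_i-c_{Q,i})\varphi_i(D'_i)}\bigl(\chi_{q_i}\calE_i f_i(\cdot,c_{Q,i})\bigr)(x'_i)$; each $\tilde u_{i,q_i}$ is of the form $\calE_i f_{i,q_i}$ (with $f_{i,q_i}$ supported in a slight $1/R$-enlargement of $U_i$), and by \eqref{mc}, $\|f_{i,q_i}\|_{L^2}\approx\|\chi_{q_i}\calE_i f_i|_{x_i=c_{Q,i}}\|_{L^2}$. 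Introduce the weighted slice-value $\tilde g_{i,q_i}(x'):=\la(x'-c(q_i))/R\ra^N\chi_{q_i}(x')\calE_i f_i(x',c_{Q,i})$ (whose $L^2(q_i)$-norm is precisely what appears on the right of \eqref{lEF}, since Schwartz tails of $\chi_{q_i}$ outside $q_i$ are negligible), and the corresponding weighted wave $u^w_{i,q_i}(x):=e^{i(x_i-c_{Q,i})\varphi_i(D'_i)}\tilde g_{i,q_i}(x'_i)=\calE_i F_{i,q_i}(x)$ with $\|F_{i,q_i}\|_{L^2}\approx\|\tilde g_{i,q_i}\|_{L^2}$ by \eqref{mc}. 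Applying the hypothesis to $\{u^w_{i,q_i}\}_i$ on a ball of radius $R$ containing $Q$ yields
\[
\|\Pi_{i=1}^k u^w_{i,q_i}\|_{L^p(Q)}\lesssim R^\epsilon\Pi_{i=1}^k\|\tilde g_{i,q_i}\|_{L^2}.
\]

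\emph{Decay via commutator (main obstacle).} By \eqref{com},
\[
u^w_{i,q_i}(x)=\Bigl\la\tfrac{x'_i-c(q_i)-(x_i-c_{Q,i})\nabla\varphi_i(D'/i)}{R}\Bigr\ra^N\tilde u_{i,q_i}(x).
\]
Since $f_{i,q_i}$ inherits the small-diameter $\xi'$-support of $f_i$, the operator $\nabla\varphi_i(D'/i)$ acts essentially as multiplication by a fixed vector $\nabla\varphi_i(\xi^\ast)$ for some reference $\xi^\ast\in U_i$, up to lower-order corrections absorbable by iterating the commutator. On $Q$ the symbol of the weight is therefore $\gtrsim\la d(\pi_i Q,q_i)/R\ra^N$, which produces the pointwise bound $|\tilde u_{i,q_i}(x)|\lesssim\la d(\pi_i Q,q_i)/R\ra^{-N}|u^w_{i,q_i}(x)|$ for $x\in Q$. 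Extracting this deterministic decay from an operator-valued weight is the hardest step: a direct kernel bound on $\tilde u_{i,q_i}$ would lose a factor $R^{(n-1)/2}$ incompatible with $R^\epsilon$, so the detour through the weighted near-optimal estimate is essential. Multiplying over $i$ and combining with the previous display,
\[
\|\Pi_{i=1}^k\tilde u_{i,q_i}\|_{L^p(Q)}\lesssim R^\epsilon\Pi_{i=1}^k\la d(\pi_i Q,q_i)/R\ra^{-N}\|\tilde g_{i,q_i}\|_{L^2}.
\]

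\emph{Summation.} Using \eqref{triangle} (with $p\leq 1$ in the relevant multilinear regime),
\[
\|\Pi_i\calE_i f_i\|_{L^p(Q)}^p\leq\sum_{q_1,\ldots,q_k}\|\Pi_i\tilde u_{i,q_i}\|_{L^p(Q)}^p\lesssim R^{\epsilon p}\Pi_i\sum_{q_i}\la d(\pi_i Q,q_i)/R\ra^{-Np}\|\tilde g_{i,q_i}\|^p_{L^2},
\]
and H\"older with exponents $2/p$ and $2/(2-p)$ bounds each $i$-sum by
\[
\Bigl(\sum_{q_i}\la d_i/R\ra^{-(2N-n^2)}\|\tilde g_{i,q_i}\|^2\Bigr)^{p/2}\Bigl(\sum_{q_i}\la d_i/R\ra^{-n^2 p/(2-p)}\Bigr)^{(2-p)/2};
\]
the exponent $n^2 p/(2-p)>n-1$ holds under $p\geq 2/k\geq 2/n$, so the last sum is $O(1)$ (the lattice $\calC\calH_i(R)$ has dimension $n-1$). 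Taking $1/p$-th power produces \eqref{lEF}.
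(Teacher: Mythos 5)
Your proposal is correct and follows essentially the same route as the paper: decompose each wave at the slice $x_i=c_{Q,i}$ with the partition \eqref{poisl}, transfer the spatial weight through the evolution via the commutator identity \eqref{com} with $\nabla\varphi_i$ frozen at a reference point (errors absorbed by iterating the commutator), apply the near-optimal hypothesis to the weighted pieces, and resum with \eqref{triangle} and H\"older. The only imprecision is the literal pointwise inequality $|\tilde u_{i,q_i}(x)|\lesssim\la d(\pi_i Q,q_i)/R\ra^{-N}|u^w_{i,q_i}(x)|$, which does not hold as stated because the weight relating the two is operator-valued; the correct (and clearly intended) statement is the comparison at the level of the multilinear $L^p(Q)$ norm, carried out one factor at a time exactly as in the paper's proof of \eqref{INS}.
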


There is one technicality that we skip in the above statement in order to keep it simple. The occurrence of $\chi_q$ in various places has the effect
of modifying the support of entities involving $f_i$ by a factor of $R^{-1}$. This would require us to assume that the original near-optimal result holds for each $f_i$ supported in $U_i + B(0,R^{-1})$ in order to claim the above for each $f_i$ supported in $U_i$. The practical effect in the final argument here is that in order to obtain the $\epsilon$-removal result under the hypothesis that each $f_i$ is supported in $U_i$, we need to know the near-optimal result for each $f_i$ supported in $U_i + B(0,c)$ for some small $0 < c \ll 1$. 

\begin{proof} We just sketch the main steps in this argument, as the argument is very similar to the one used for proving the more difficult $\eqref{INS}$; in the present context the $\delta$ that appears in $\eqref{INS}$ should is essentially $\approx 1$.

The first observation is that, without restricting the generality of the argument we can assume that 
$Q$ is centered at the origin, that is $c_{Q}=0$.  The above then becomes
\begin{equation} \label{lEFr}
\| \Pi_{i=1}^k \calE_i f_i \|_{L^p(Q)} \lesssim R^\epsilon \Pi_{i=1}^k \left( \sum_{q \in \calC \calH_i (R)} \la \frac{c(q)}R \ra^{-(2N-n^2)} \| \la \frac{x_i'-c(q)}R \ra^{N} \chi_{q} \calE_i f_i|_{x_i=0} \|^2_{L^2(q)} \right)^\frac12. 
\end{equation}
 
We establish the improvement one term at the time, starting with $\calE_1f_1$. Since $\varphi_1(0)=0$ and $\nabla \varphi_1(0)=0$, we work with the basic multiplier $x_1'$ and \eqref{com} to justify the following identity
\[
\begin{split}
x_1' \calE_1 \mathcal{F}_1 (\chi_q  \mathcal{F}_1^{-1} f_1)& = (x_1'- x_{1} \nabla \varphi_1(\frac{D'}i))  \calE_1 \mathcal{F}_1 (\chi_q  \mathcal{F}_1^{-1} f_1) + x_{1} \nabla \varphi_1(\frac{D'}i)  \calE_1 \mathcal{F}_1 (\chi_q  \mathcal{F}_1^{-1} f_1) \\
& = \calE_1 (\mathcal{F}_1( x'_1  \chi_q \mathcal{F}_1^{-1} f_1)) (x) + x_{1} \nabla \varphi_1(\frac{D'}i)  \calE_1 (\mathcal{F}_1( \chi_q \mathcal{F}_1^{-1} f_1)).
\end{split}
\]
Arguing in a similar manner to the way we do in the prof of \eqref{INS} we derive \eqref{lEFr}; the details are left as an exercise. One important observation is that \eqref{EFF} provides the relation between $\calE_1 f_1|_{x_1=0}$ and $\mathcal{F}_1 f_1$ that allows us to switch from using one of them to the other one. 

\end{proof}

We now proceed with the proof of Theorem \ref{epsr}.  Without restricting the generality of the argument, we can assume that $\|f_i\|_{L^2}=1, i=1,..,k$. We do this by  providing a weak type estimate, that is if
\[
E(\lambda)= \{ x \in \R^n: |\Pi_{i=1}^k \calE_i f_i| \geq \lambda \},
\]
it suffices to prove $|E(\lambda)| \lesssim \lambda^{-q}$ for some $q$ satisfying $ q < p + (n+p+1)\frac{D}{\log \frac1\epsilon}$.

The first observation is that if $x \in E(\lambda)$, then for some universal $c \approx 1$, $B_{c \lambda} (x) \subset E(\frac{\lambda}2)$. This follows from the simple computation
\[
 |\Pi_{i=1}^k \calE_i f_i(x) -  \Pi_{i=1}^k \calE_i f_i(x_0) | \leq k C_1^{k-1} \max_{i} |\calE_i f_i(x)- \calE_i f_i(x_0)| \leq k C_1^{k-1} C_2 |x-x_0|
\]
where
\[
C_1= \max_i \| \calE_i f_i \|_{L^\infty} \lesssim 1, \quad C_2 = \max_i \| \nabla \calE_i f_i \|_{L^\infty} \lesssim 1.
\]
Thus if we choose $c= \min (1,\frac12 (k C_1^{k-1} C_2)^{-1}) \approx 1$, it follows that $|\Pi_{i=1}^k \calE_i f_i(x) -  \Pi_{i=1}^k \calE_i f_i(x_0) | \leq \frac{\lambda}2$ and the conclusion follows. 

As a consequence, if we let $F=\cup_{x \in E(\lambda)} B_{c \lambda}(x)$ we obtain that $F \subset E(\frac{\lambda}2)$; we let $G=\cup_{x \in E(\lambda)} B_{1}(x)$ and note that $|G| \lesssim  \lambda^{-n} |F|$. 

Next we use the following result due to Tao \cite{Tao-BR}. We say that a collection $\{B_R(x_i)\}_{i=1}^N$ is sparse if the centers $x_i$ are $R^C N^C$ separated. 

\begin{lema} [Lemma 3.3, \cite{Tao-BR}]Suppose $E$ is the union of $c$-cubes and $N \geq 1$. Then there exists $O(N|F|^\frac1N)$ sparse collections of balls that cover $G$, such that the balls in each collection have radius $1 \lesssim R \lesssim |E|^{C^N}$. 
\end{lema}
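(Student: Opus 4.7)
The plan is to follow Tao's original greedy, multi-scale construction. Since $G$ is a union of at most $\lesssim |F|$ unit balls, the task reduces to reorganizing this unit cover into a small number of sparse sub-collections at controlled scales. I would proceed through $N$ rounds, indexed $j = 0,\dots,N-1$, maintaining at the start of round $j$ a collection of $M_j$ balls of radius $R_j$ covering the portion of $G$ not yet handled by a previously emitted sparse sub-collection, with $R_0 \approx 1$ and $M_0 \lesssim |F|$.

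Within round $j$, I would greedily extract maximal sparse sub-collections at scale $R_j$ (centers pairwise $R_j^C N^C$-separated) and emit each one to the output list. A standard volume-packing count shows that the parameters can be arranged so that at most $O(|F|^{1/N})$ such sub-collections are emitted per round, while the balls left unemitted --- whose centers now cluster into groups of diameter $\lesssim R_j^C N^C$ --- consolidate into $M_{j+1} \leq M_j / |F|^{1/N}$ balls of radius $R_{j+1} := R_j^C N^C$ that feed round $j{+}1$. Telescoping the count gives $M_N \lesssim 1$, so the procedure terminates after $N$ rounds; summing the per-round output yields $\sum_{j=0}^{N-1} O(|F|^{1/N}) = O(N |F|^{1/N})$ sparse collections in total; and the scale recursion $R_{j+1} = R_j^C N^C$ iterates to $R_N \leq |F|^{C^N} \leq |E|^{C^N}$, matching the required radius bound.

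The main obstacle, and the point that must be carefully calibrated, is the interplay between the separation threshold that defines \emph{sparse} and the per-round scale growth. Too weak a separation produces sub-collections that are too small, inflating the count past $O(N |F|^{1/N})$; too strong a separation blows $R_N$ past $|E|^{C^N}$. Verifying that the clustering step really consolidates almost all unemitted balls into the next scale --- so that the algorithm genuinely terminates in exactly $N$ rounds with the advertised reduction factor, and produces sub-collections whose balls (at each chosen radius $R_j$) jointly cover all of $G$ --- is the technical heart of the argument, and the book-keeping is essentially the one written out in Lemma 3.3 of \cite{Tao-BR}.
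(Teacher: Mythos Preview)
The paper does not prove this lemma at all: it is quoted verbatim as Lemma~3.3 of \cite{Tao-BR} and used as a black box. Your sketch correctly outlines Tao's original greedy multi-scale argument from \cite{Tao-BR}, so your approach and the paper's ``approach'' (namely, cite Tao) are the same.
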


It is easy to see that the above statement can be made in terms of cubes, rather than balls, and this is how we use it below. Thus the above lemma gives us 
$O(N|G|^\frac1N)$ sparse collections $O_l$ of cubes that cover the set $G$ (and hence the set $F$), such that the cubes in each collection $O_l$  have radius/side-length $1 \lesssim R_l \lesssim |E|^{C^N}$. We fix a sparse collection of cubes $O_l$; for each cube $Q \in O_l$ we apply the near-optimal result in its refined version \eqref{lEF}:
\[
\| \Pi_{i=1}^k \calE_i f_i \|_{L^p(Q)} \lesssim R_l^\epsilon \Pi_{i=1}^k \left( \sum_{q \in \calC \calH_i (R)} \la \frac{d(\pi_{i} Q, q)}R \ra^{-(2N-n^2)} \| \la \frac{x_i'-c(q)}R \ra^{N} \chi_{q} \calE_i f_i|_{x_i=c_{Q,i}} \|^2_{L^2(q)} \right)^\frac12.  
\]
Based on Lemma \ref{sparse} part ii) below, we can conclude that each of the terms on the right hand-side has the $l^2_{Q \in O_l}$ norm bounded by $\|f_i\|_{L^2}$; thus their product has $l^{\frac{2}{k}}_{Q \in O_l}$ norm bounded by $\Pi_{i=1}^k \|f_i\|_{L^2}$ and since $p \geq \frac2{k}$ it follows that
\[
\| \Pi_{i=1}^k \calE_i f_i \|_{L^p(O_l)} \lesssim R_l^\epsilon \Pi_{i=1}^k \|f_i\|_{L^2}=R_l^\epsilon. 
\]
Since $F \subset E(\frac{\lambda}2)$, it follows that
\[
\begin{split}
| F | \lesssim & \lambda^{-p} \| \Pi_{i=1}^k \calE_i f_i \|^p_{L^p(F)} \lesssim \lambda^{-p} \sum_l \| \Pi_{i=1}^k \calE_i f_i \|_{L^p(O_l)}^p \\
\lesssim  & \lambda^{-p}  \sum_l R_l^{p \epsilon} \lesssim \lambda^{-p} N|G|^\frac1N (|G|^{C^N})^{p \epsilon} \\
= & \lambda^{-p} N |G|^{\frac1N+ p \epsilon C^N} \lesssim \lambda^{-p} N ( \lambda^{-n} |F|)^{\frac1N+ p \epsilon C^N}. 
\end{split}
\] 
We let $\beta=\frac1N+ p \epsilon C^N$ and derive the following estimate
\[
|F| \lesssim N \lambda^{-\frac{p+n \beta}{1-\beta}}. 
\]
For $\epsilon \ll e^{-C} \leq 1$, we make the choice $N=C^{-1} \log{\frac1\epsilon}$; then it follows  $\frac{C}{\log{\frac1\epsilon}} \leq \beta \leq \frac{2C}{\log{\frac1\epsilon}} \ll 1$ and the following holds true $\frac{p+n\beta}{1-\beta} < p+(n+p+1) \beta \leq p+ (n+p+1) \frac{2C}{\log{\frac1\epsilon}}$. Thus our argument 
for Theorem \ref{epsr} is complete provided that we establish the following result.

\begin{lema} \label{sparse} 
i) Assume that $S=\Sigma(U)$ is a smooth hypersurface that is parametrized by $\xi_n= \varphi(\xi_n')$. In addition, we assume that there exists 
$\alpha > 0$ such that
\begin{equation} \label{decay1}
| \calE \psi | \lesssim C(\psi) (1+|x|)^{-\alpha}
\end{equation}
for any smooth $\psi$ supported within $U$. 

Given a $C$-sparse family of cubes $\{ Q_j \}_{j=1}^N$, for some $C > \frac{\min(2,n-1)}\alpha$, the following holds true
\begin{equation} \label{spre}
\sum_{j=1}^N \| \calE f|_{x_n=c_n(Q_j)} \|_{L^2(\pi_n(Q_j))}^2 \lesssim \| f \|_{L^2}^2,
\end{equation}
for any $f$ supported within $U$.

ii) Under similar hypothesis as in i), the following holds true:  
\begin{equation} \label{decay2}
\sum_{j=1}^N \sum_{q \in \calC \calH_i (R)} \la \frac{d(\pi_{i} Q_j, q)}R \ra^{-(2N-n^2)} \| \la \frac{x_i'-c(q)}R \ra^{N} \chi_{q} \calE_i f_i|_{x_i=c_{Q,i}} \|^2_{L^2(q)} \lesssim \|f_i\|_{L^2}^2. 
\end{equation}
\end{lema}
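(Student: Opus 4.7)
The strategy is a $TT^*$ argument combined with Schur's test, with the decay \eqref{decay1} providing the off-diagonal smallness. Fix a smooth cutoff $\tilde\psi$ supported in $U$ with $\tilde\psi \equiv 1$ on the support of $f$, so that $f = \tilde\psi f$, and define
\[
T_j : L^2(\R^{n-1}) \to L^2(\pi_n(Q_j)), \qquad T_j f(x) = \chi_j(x) \calE(\tilde\psi f)(x, c_n(Q_j)),
\]
where $\chi_j$ is a (Schwartz) localizer to $\pi_n(Q_j)$. Part (i) is then the estimate $\|\bigoplus_j T_j\|^2 \lesssim 1$, which by $TT^*$ and matrix Schur is controlled by $\sup_j \sum_k \|T_j T_k^*\|_{L^2 \to L^2}$.

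The key calculation, using the structure of $\calE$ and \eqref{calei}, is
\[
T_j T_k^* g(x) = \chi_j(x) \int \chi_k(y) h_{jk}(x - y) g(y) dy, \qquad h_{jk}(z') = \calE(\tilde\psi^2)\bigl(z', c_n(Q_j) - c_n(Q_k)\bigr).
\]
Applying \eqref{decay1} to $\psi = \tilde\psi^2$ gives $|h_{jk}(z')| \lesssim \la (z', c_n(Q_j) - c_n(Q_k))\ra^{-\alpha}$. For $x \in \pi_n(Q_j)$ and $y \in \pi_n(Q_k)$ the $\R^n$-vector $(x - y, c_n(Q_j) - c_n(Q_k))$ differs from $c(Q_j) - c(Q_k)$ by at most $O(R)$, so the sparsity forces $|h_{jk}(x - y)| \lesssim (R^C N^C)^{-\alpha}$ for $j \neq k$. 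The diagonal bound $\|T_j T_j^*\|_{L^2 \to L^2} \lesssim 1$ is immediate from the $L^2$-isometry \eqref{mc}.

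The naive Schur bound using only the pointwise decay of $h_{jk}$ and the volumes of $\pi_n(Q_k)$ yields $\|T_j T_k^*\| \lesssim R^{n-1}(R^C N^C)^{-\alpha}$ for $j \ne k$, which summed over the at most $N$ indices is controlled under the threshold $C\alpha > n - 1$; this already proves the lemma for $n = 2, 3$. The main technical obstacle, and the heart of the argument for $n \geq 4$, is to obtain the sharper condition $C\alpha > \min(2, n-1)$. This is done by supplementing the pointwise decay with the complementary Plancherel bound $\|h_{jk}\|_{L^2(\R^{n-1})} \lesssim 1$ (since $h_{jk}$ is the inverse Fourier transform in $z'$ of the bounded multiplier $\tilde\psi^2(\xi) e^{i(c_n(Q_j) - c_n(Q_k))\varphi(\xi)}$). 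A dyadic decomposition of the kernel on the set $\pi_n(Q_j) - \pi_n(Q_k)$, together with Cauchy-Schwarz that interpolates between the $L^\infty$ decay and this $L^2$ bound, reduces the effective $R^{n-1}$ volume factor to $R^{\min(2, n-1)}$, whence the Schur sum is $\lesssim 1 + N^{1-C\alpha} R^{\min(2,n-1) - C\alpha} \lesssim 1$ under the stated hypothesis on $C$.

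Part (ii) is a weighted variant of part (i). Define
\[
T_{j,q} f = w_{j,q}^{1/2} \chi_q \la (x_i'-c(q))/R\ra^N \calE(\tilde\psi f)(\cdot, c_n(Q_j)), \qquad w_{j,q} = \la d(\pi_i(Q_j), q)/R\ra^{-(2N-n^2)},
\]
and rerun the argument. The Schwartz weight $\la (x_i'-c(q))/R\ra^N \chi_q$ acts as a polynomial-tailed localizer to $q$, and the inner sum over $q \in \calC\calH_i(R)$ at fixed $j$ is controlled by \eqref{SNl}, collapsing it to a constant multiple of the unweighted analog from part (i). The factor $w_{j,q}$ furnishes additional polynomial decay that is harmlessly absorbed into the matrix Schur computation for the outer $j$-sum, yielding \eqref{decay2} from the same kernel analysis.
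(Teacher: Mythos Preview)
Your $TT^*$ strategy is exactly the paper's: it too defines $T=\bigoplus_j T_j$, identifies $T_jT_k^*$ as convolution against $K(\cdot,\,c_n(Q_j)-c_n(Q_k))$, and from \eqref{decay1} together with the unitarity of $e^{it\varphi(D)}$ obtains the off-diagonal bound
\[
\|T_jT_k^*\|_{L^2\to L^2}\lesssim\min\bigl(1,\;R^{n-1}\,|c(Q_j)-c(Q_k)|^{-\alpha}\bigr)
\]
(this is the paper's \eqref{duales}), then feeds this into a Schur test using sparseness. Your treatment of part (ii) via \eqref{SNl} is likewise in line with the paper, which does not write out (ii) separately.

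The gap is your claimed reduction of the volume factor from $R^{n-1}$ to $R^{2}$ for $n\ge 4$. The Plancherel input $\|h_{jk}\|_{L^2(\R^{n-1})}\lesssim 1$ carries \emph{no} decay in $|c(Q_j)-c(Q_k)|$, so any interpolation with the pointwise bound that retains the full power $|c|^{-\alpha}$ must also retain the full volume $R^{n-1}$: on a set $\Omega$ of measure $R^{n-1}$, the two inputs $|h|\le M$ and $\|h\|_{L^2(\Omega)}\le 1$ yield only $\|h\|_{L^1(\Omega)}\lesssim\min\bigl(MR^{n-1},\,R^{(n-1)/2}\bigr)$, and optimizing this in the Schur sum still forces $C\alpha>n-1$, not $C\alpha>2$. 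The paper itself stops at \eqref{duales} and simply asserts the conclusion, so you are not missing an argument the paper supplies; but the dyadic/Cauchy--Schwarz step you sketch does not actually justify the stated threshold $C>\min(2,n-1)/\alpha$ when $n\ge 4$.
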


\begin{proof} The estimate above can be restated as follows. If $g= \check f$, then from \eqref{calei}, it follows that \eqref{spre} is equivalent to
\begin{equation} \label{spre1}
\sum_{j=1}^N \| \chi_{\pi_n Q_j} e^{i c_n(Q_j) \varphi(D)} g \|_{L^2(\pi_n(Q_j))}^2 \lesssim \| g \|_{L^2}^2,
\end{equation}
for any $g$ that is supported within $U$. We let $T: L^2(\R^n) \rightarrow l^2 L^2(\R^n)$ be defined by
\[
Tf=(\chi_{\pi_n Q_j} e^{i c_n(Q_j) \varphi(D)} g )_{j=1}^N.
\]
Then \eqref{spre} is equivalent to proving that $\| T \| \lesssim 1$, where by the operator norm we mean the norm of $T$ acting from $L^2(\R^n) \rightarrow l^2 L^2(\R^n)$. Its adjoint operator $T^*: l^2 L^2(\R^n)  \rightarrow L^2(\R^n)$ is given by
\[
T^*F=\sum_{j=1}^N e^{- i c_n(Q_j)  \varphi(D)} ( \chi_{\pi_n Q_j}  F_j).
\]
Thus, it suffices to establish that the operator $T T^*: l^2 L^2(\R^n)  \rightarrow l^2 L^2(\R^n)$ has a good bound on its norm, that is $\| T T^* \| \lesssim 1$
$TT^*$ is given by
\[
T T^*F=(\chi_{\pi_n Q_k} e^{- i c_n(Q_k) \varphi(D)} \sum_{j=1}^N e^{ i c_n(Q_j) \varphi(D)} \chi_{\pi_n Q_j} F_j )_{k=1}^N.
\]
In the above we make the additional observation that each $F_j$ has its Fourier transform $\hat F_j$ is supported in a small neighborhood (of size $O(R^{-1})$) 
of $U$;  this is a consequence of the original localization property of the function $g$, namely that $\hat g$ was supported in $U$. Formally we could have rigorously 
kept track of this by inserting an additional operator $\chi_U(D)$ (where $\chi_U$ is the characteristic function of the set $U$  in the definition of $T$, but this would have complicated the exposition without bringing any useful insight into the argument. 

We claim the following estimate:
\begin{equation}\label{duales}
\| \chi_{\pi_n Q_k} e^{- i (c_n(Q_k)- c_n(Q_j))  \varphi(D)}  \chi_{\pi_n Q_j} F_j \|_{L^2} \lesssim min(1, R^{n-1} \la c(Q_k) - c(Q_j) \ra^{-\alpha}) \| F_j \|_{L^2}.
\end{equation}
Using the sparseness of the set of cubes with $C > \frac{\min(2,n-1)}{\alpha}$, it follows that $TT^*$ is bounded and we can conclude the proof of \eqref{spre}.

We now prove \eqref{duales}. Using the full translation invariance of the estimate, \eqref{duales} is equivalent to 
\begin{equation}\label{duales1}
\| \chi_{ Q(c_n')} e^{- i c_n  \varphi(D)}  \chi_{Q(0)} F_j \|_{L^2} \lesssim min(1,R^{n-1} \la c \ra^{-\alpha}) \| F_j \|_{L^2}.
\end{equation}
To get the bound with constant $1$, we simply use the fact that all operators above are bounded on $L^2$. To get the second bound we start with the explicit formula
\[
e^{- i c_n  \varphi(D)}  \chi_{Q(0)} F_j = K(\cdot, c_n) \ast (\chi_{Q(0)} F_j), 
\]
with
\[
K(x_n',x_n)= \int e^{i (x_n' \xi_n'+ x_n \varphi(\xi_n'))} \chi(\xi_n') d\xi_n'
\]
where $\chi(\xi_n')$ keeps track of the localization property of each $F_j$. From \eqref{decay1} it follows that $K$ satisfies the decay properties:
\[
|K(x)| \lesssim (1+|x|)^{-\alpha}. 
\]
Therefore
\[
\| K \|_{L^1(Q_R(c_n'))} \lesssim R^{n-1} (1+\|c\|)^{-\alpha}. 
\]
From this we obtain the \eqref{duales1} with constant $R^{n-1} (1+\|c\|)^{-\alpha}$. 

\end{proof}

We end this section with the proof of Corollary \ref{epsrc}. From \cite{St} (see chapter 8, section 3.2, Theorem 2) it follows that if $S$ has $l$-type,
then $|\calE \psi(x) | \lesssim (1+|x|)^{-\frac1l}$. Then i) follows from Theorem \ref{epsr}. 

For ii), the basic idea is that for the hypersurfaces $S_1,..,S_{k_1}$ we can still use parts of the analysis above, while for the $S_{k_1+1},..,S_{k}$ planar hypersurfaces we will make use of the known trivial estimates. 

The argument follows the same steps as the one used in the proof of Theorem \ref{epsr}. When we arrive at the use of the near-optimal result in its refined version \eqref{lEF}:
\[
\| \Pi_{i=1}^k \calE_i f_i \|_{L^p(Q)} \lesssim R_l^\epsilon \Pi_{i=1}^k \left( \sum_{q \in \calC \calH_i (R)} \la \frac{d(\pi_{i} Q, q)}R \ra^{-(2N-n^2)} \| \la \frac{x_i'-c(q)}R \ra^{N} \chi_{q} \calE_i f_i|_{x_i=c_{Q,i}} \|^2_{L^2(q)} \right)^\frac12,
\]
we cannot make use of Lemma \ref{sparse} for all hypersurfaces involved, thus we cannot recover the estimate in $l^{\frac{2}{k}}_{Q \in O_l}$ bounded by $\Pi_{i=1}^k \|f_i\|_{L^2}$. What we can do instead, is to recover an estimate in $l^{\frac{2}{k_1}}_{Q \in O_l}$ for the product of the first $k_1$
terms on the right-hand side with a bound of $\lesssim \Pi_{i=1}^{k_1} \|f_i\|_{L^2}$. For the other terms we rely on an exact calculus. As we described in the section \ref{sbr}, we have chosen the coordinates such that $n_{k_1+1},..,n_{k}$, the normals at the hyperplanes where $S_{k_1+1},..,S_{k}$ respectively lie, belong to the set of coordinate directions. As a consequence, $\calE_i f_i|_{x_i=c_{Q,i}}=\calE_i f_i|_{x_i=0}$. Thus the remaining term to be estimated is 
\[
\Pi_{i=k_1+1}^k \left( \sum_{q \in \calC \calH_i (R)} \la \frac{d(\pi_{i} Q, q)}R \ra^{-(2N-n^2)} \| \la \frac{x_i'-c(q)}R \ra^{N} \chi_{q} \mathcal{F}_i^{-1} f_i|_{x_i=0} \|^2_{L^2(q)} \right)^\frac12.
\]
It is an easy exercise to see that we obtain an $l^{\frac2{k-k_1-1}}$ estimate for the above term with a bound of 
$\lesssim \Pi_{k_1+1}^k \|f_i\|_{L^2}$ - morally this is at the level of Loomis-Whitney inequality. Alternatively, 
each term above has $l^2l^\infty (R \mathcal{L} (\calH_i) \times R span(N_i))$ with respect to $Q \in \calC(R)$ and with a bound of $\lesssim \|f_i\|_{L^2}$. This leads to the estimate $l^{\frac2{k-k_1-1}}$ as above using similar but simpler arguments to the ones in Lemma \ref{Le3}. 

Finally iii) follows from ii). 

\section{Proof of Theorem \ref{MB}} \label{MBP}

The proof uses an induction of scales argument. When moving from smaller scales to the larger ones, it is very useful
to have tight control of the leaks of mass between various spatial regions for each individual wave $\calE_i f_i$. 
This is achieved by splitting each domain $U_i$ into smaller pieces of diameter $\leq \delta$, for some $0 < \delta \ll 1$. 
This, in turn, splits the surfaces $S_i=\Sigma_i(U_i)$ in the corresponding pieces. 
 It suffices to prove the multilinear estimate for each $S_i$ being replaced
by one of its pieces, since then we can sum up the estimates for all possibles combinations of pieces using \eqref{triangle} and generate the original estimate at a cost of picking a factor that is a power of $\delta^{-1}$. In the end of the argument, $\delta$ will be chosen in terms of absolute constants and $\epsilon$, but not $R$, and the power of $\delta^{-1}$  will be absorbed into $C(\epsilon)$. This idea originates from the work of Guth in \cite{Gu-easy} and the author later used it in \cite{Be1,Be3,Be4}. 

Once we have decided to work with these pieces from each $S_i$, we run the argument in section \ref{sbr}: we translates the pieces so that $0 \in S_i$, and redefine all the entities there.

 The proof  of Theorem \ref{MB} relies on estimating $\Pi_{i=1}^{k} \calE_i f_i$ on cubes on the physical side and analyze how the estimate
behaves as the size of the cube goes to infinity by using an inductive type argument with respect to the size of the cube.
As we move from one spatial scale to a larger one, we will have to tolerate slightly larger Fourier support in the argument. But this accumulation is in the form of a convergent geometric series, therefore the only harm it does is imposing an additional technical layer in the argument. 

\begin{defin} \label{DefI}
Given $R \geq \delta^{-2}$ we define $A(R)$ to be the best constant for which the estimate
\begin{equation}
\| \Pi_{i=1}^{k} \calE_i f_i \|_{L^{\frac2{k-1}}(Q)} \leq A(R,\delta,\mu) \Pi_{i=1}^{k} \| f_i \|_{L^2} 
\end{equation}
holds true for all cubes $Q \in \calC(R)$, under the assumption that $f_i$ is supported in the set 
\[
B_{\mu_i+10R^{-1}}(M_i) \cap B(0,\delta), \quad \forall i=1,..k.
\]
\end{defin}

In the above we have used the notation $\mu=(\mu_{1},..,\mu_k)$. The induction starts from $R \geq \delta^{-2}$ in order to be able to propagate the support hypothesis.  We also tacitly assume that $\mu_i \ll \delta^{-2}, \forall i=1,..,k$, or else the final gain of $\mu_i^{\frac{c_i}2}$ is indistinguishable from powers of $\delta^{-1}$ which will naturally contribute to the final constant $C(\epsilon)$ that appears in the min estimate \eqref{Lf}. 

We provide an estimate inside any cube $Q \in \calC(\delta^{-1}R)$ based on prior information
on estimates inside cubes $q \in \calC(R) \cap Q$. Without restricting the generality of the argument, we assume
that $Q$ is centered at the origin and recall that each $q \in \calC(R) \cap Q$ has its center in
$R \mathcal{L}$. When such a  $q$ is projected using $\pi_{i}$ onto $\calH_i$ one obtains $\pi_{i} q \in \calC \calH_i(R)$.

Each $q \in \calC(R) \cap Q$ has size $R$ and the induction hypothesis is the following:
\begin{equation} \label{IO}
\| \Pi_{i=1}^{k} \calE_i f_i \|_{L^{\frac2{k-1}}(q)} \leq A(R,\delta,\mu) \Pi_{i=1}^{k} \| f_i \|_{L^2}. 
\end{equation}
We strengthen this as follows:
\begin{equation} \label{INS}
\begin{split}
& \| \Pi_{i=1}^{k} \calE_i f_i \|_{L^{\frac2{k-1}}(q)} \les  A(R,\delta,\mu) \\
 & \cdot \Pi_{i=1}^{k} \left( \sum_{q' \in \calC \calH_i(R)} \la \frac{d(\pi_{i} q,q')}R \ra^{-(2N-n^2)} \| \la \frac{x_i'-c(q')}R \ra^{N} \chi_{q'} \mathcal{F}_i^{-1} f_i \|_{L^2}^2 \right)^\frac12. 
\end{split}
\end{equation}

The basic idea in \eqref{INS} is the following: if $q' \ne \pi_{i} q$, then 
$\calE_1 \mathcal{F}_1 ( \chi_{q'}  \mathcal{F}_1^{-1} f_1)$ has off-diagonal type contribution outside $q' \times [-\delta^{-1} R, \delta^{-1} R]$ (the interval stands for the $i$'th slot), thus it has off-diagonal type contribution to the left-hand side of \eqref{INS}. We now turn to the details and fix $i=1$ and $q' \in \calC \calH_1(R)$. With $x=(x_1,x'_1)$ we have
\[
\begin{split}
& \| (x_1'- c(q')-x_1 \nabla \varphi_1(\xi'_0)) \calE_1 \mathcal{F}_1 ( \chi_{q'}  \mathcal{F}_1^{-1} f_1) \cdot \Pi_{i=2}^{k}  \calE_i f_i \|_{L^{\frac2{k-1}}(q)} \\
\leq & \| (x_1'- c(q')-x_1 \nabla \varphi_1(\frac{D'}i)) \calE_1 \mathcal{F}_1 ( \chi_{q'} \mathcal{F}_1^{-1} f_1) \cdot \Pi_{i=2}^{k}  \calE_i f_i  \|_{L^{\frac2{k-1}}(q)} \\
+ & \| x_1 ( \nabla \varphi_1(\xi'_0) - \nabla \varphi_1(\frac{D'}i)) \calE_1 \mathcal{F}_1 ( \chi_{q'} \mathcal{F}_1^{-1} f_1) \cdot \Pi_{i=2}^{k}  \calE_i f_i  \|_{L^{\frac2{k-1}}(q)} \\
= & \|  \calE_1 \mathcal{F}_1 ( (x_1'-c(q'))  \chi_{q'} \mathcal{F}_1^{-1} f_1) \cdot \Pi_{i=2}^{k}  \calE_i f_i  \|_{L^{\frac2{k-1}}(q)} \\
+ & \| x_1  \calE_1 \mathcal{F}_1 ( ( \nabla \varphi_1(\xi'_0) - \nabla \varphi_1(\xi')) \chi_{q'} \mathcal{F}_1^{-1} f_1) \cdot \Pi_{i=2}^{k}  \calE_i f_i  \|_{L^{\frac2{k-1}}(q)} \\
\leq &  A(R,\delta,\mu) \| (x'_1-c(q')) \chi_{q'} \mathcal{F}_1^{-1} f_1 \|_{L^2} \\
+ & A(R,\delta,\mu) \delta^{-1} R \| (\nabla \varphi_1(\xi'_0) - \nabla \varphi_1(\xi')) \chi_{q'} \mathcal{F}_1^{-1} f_1 \|_{L^2}  \Pi_{i=2}^{k} \| f_i \|_{L^2}  \\
\les & A(R,\delta,\mu) \left(  \| (x'_1-c(q')) \chi_{q'} \mathcal{F}_1^{-1} f_1 \|_{L^2} + R \| \chi_{q'} \mathcal{F}_1^{-1} f_1 \|_{L^2} \right)  \Pi_{i=2}^{k} \| f_i \|_{L^2} \\
\les & R A(R,\delta,\mu) \| \la \frac{x'_1-c(q')}{R} \ra \chi_{q'} \mathcal{F}_1^{-1} f_1 \|_{L^2}  \Pi_{i=2}^{k} \| f_i \|_{L^2}.
\end{split}
\]
We have used the following: \eqref{com} in justifying the equality between the terms on the second and fourth line, the induction hypothesis and the fact that inside $Q$ we have $|x_1| \lesssim \delta^{-1}R$ to justify the inequality in the sixth line. Note that it is in the above use of the induction estimate for $\calE_1 \mathcal{F}_1 ( (x'_1-c(q'))  \chi_{q'} \mathcal{F}_1^{-1} f_1)$ that we need to tolerate the relaxed support of $f_1$. $f_1$ is supported in the set $B_{\mu_1+10 (\delta^{-1} R)^{-1}}(M_1) \cap B_1(0,\delta)$, and this support is impacted by the convolution with $\mathcal{F}_1 ((x'_1-c(q'))  \chi_{q'})$ which is supported in $B_1(0,R^{-1})$; the sum set of the two supports is a subset of  $ B_{\mu_1+10 (\delta^{-1} R)^{-1}}(M_1)  \cap B_1(0,\delta)$;
thus we can invoke the induction hypothesis at scale $R$.

For any $q \in \calC(R) \cap Q$ and $x' \in \pi_{N_1}(q)$, it holds that 
$\la \frac{x'- c(q')-x_1 \nabla \varphi_1(\xi'_0)}R \ra \approx \la \frac{d(\pi_{N_1}(q),q')}R \ra$. 
This is justified by the fact that $|x_1| \lesssim \delta^{-1}R$ and $|\nabla \varphi_1(\xi'_0)| \leq \delta$, 
therefore the contribution of $|x_1 \nabla \varphi_1(\xi'_0)| \leq R$ is negligible. From this and the previous set of estimates,
we conclude that
\[
\begin{split}
&  \| \calE_1 \mathcal{F}_1 ( \chi_{q'} \mathcal{F}_1^{-1} f_1) \cdot \Pi_{i=2}^{k}  \calE_i f_i \|_{L^{\frac2{k-1}}(q)} \\
\les & A(R,\delta,\mu) \la  \frac{d(\pi_{N_1}q,q')}{R} \ra^{-1} \| \la \frac{x'_1-c(q')}{R} \ra \chi_{q'} \mathcal{F}_1^{-1} f_1 \|_{L^2}  \Pi_{i=2}^{k} \| f_i \|_{L^2}.
\end{split}
\]
We claim that we can extend the argument above to prove 
\begin{equation} \label{glocN}
\begin{split}
& \| \calE_1 \mathcal{F}_1 ( \chi_{q'} \mathcal{F}_1^{-1} f_1) \cdot \Pi_{i=2}^{k}  \calE_i f_i \|_{L^{\frac2{k-1}}(q)} \\
 \ls_N & A(R,\delta,\mu) \la  \frac{d(\pi_{N_1}q,q')}{R} \ra^{-N} \| \la \frac{x'_1-c(q')}{R} \ra^N \chi_{q'} \mathcal{F}_1^{-1} f_1 \|_{L^2}  \Pi_{i=2}^{k} \| f_i \|_{L^2},
\end{split}
\end{equation}
for all $N \geq 1$. In repeating the argument above, we need to start with the higher order terms:
\[
\| (x_1'- c(q')-x_1 \nabla \varphi_1(\xi'_0))^N \calE_1 \mathcal{F}_1 ( \chi_{q'}  \mathcal{F}_1^{-1} f_1) \cdot \Pi_{i=2}^{k}  \calE_i f_i \|_{L^{\frac2{k-1}}(q)}.
\]
If $N \geq 2$ some more, but harmless, terms appear due to the lack of commutativity between the symbols of the operators that are being used. 
We do this for $N=2$; the other cases are treated in a similar fashion. We use the following operator decomposition
\[
x_1'- c(q')-x_1 \nabla \varphi_1(\xi'_0) = A+B, \quad A = x_1'- c(q')-x_1 \nabla \varphi_1(\frac{D'}i), \quad B= x_1 ( \nabla \varphi_1(\xi'_0) - \nabla \varphi_1(\frac{D'}i)),
\]
based on which we have
\[
(x_1'- c(q')-x_1 \nabla \varphi_1(\xi'_0))^2 = (A+B)^2= A^2+ B^2+ 2BA + [A,B].  
\] 
Each component is treated as follows. The contribution of the term with $A^2$ is estimated as above and using \eqref{com}. 
The contribution of the term with $B^2$ is estimated as above and using the fact that 
\[
B^2= x_1^2 ( \nabla \varphi_1(\xi'_0) - \nabla \varphi_1(\frac{D'}i))^2,
\]
which is a consequence of the commutativity of $x_1$ and $\nabla \varphi_1(\xi'_0) - \nabla \varphi_1(\frac{D'}i)$. The contribution of the $BA$ term is estimated as follows:
\[
\begin{split}
& \| BA \calE_1 \mathcal{F}_1 ( \chi_{q'}  \mathcal{F}_1^{-1} f_1) \cdot \Pi_{i=2}^{k}  \calE_i f_i \|_{L^{\frac2{k-1}}(q)} \\
= & \| x_1 ( \nabla \varphi_1(\xi'_0) - \nabla \varphi_1(\frac{D'}i))  \calE_1 \mathcal{F}_1 ( (x_1'-c(q'))  \chi_{q'}  \mathcal{F}_1^{-1} f_1) \cdot \Pi_{i=2}^{k}  \calE_i f_i \|_{L^{\frac2{k-1}}(q)} \\
 \lesssim & \delta^{-1} R \|    \calE_1 ( \nabla \varphi_1(\xi'_0) - \nabla \varphi_1(\xi')) \mathcal{F}_1 ( (x_1'-c(q'))  \chi_{q'}  \mathcal{F}_1^{-1} f_1) \cdot \Pi_{i=2}^{k}  \calE_i f_i \|_{L^{\frac2{k-1}}(q)} \\
  \lesssim & \delta^{-1} R \|  ( \nabla \varphi_1(\xi'_0) - \nabla \varphi_1(\xi')) \mathcal{F}_1 ( (x_1'-c(q'))  \chi_{q'}  \mathcal{F}_1^{-1} f_1) \|_{L^2} 
  \cdot \Pi_{i=2}^{k}  \| f_i \|_{L^{2}} \\
   \lesssim & R \|  \mathcal{F}_1 ( (x_1'-c(q'))  \chi_{q'}  \mathcal{F}_1^{-1} f_1) \|_{L^2} 
  \cdot \Pi_{i=2}^{k}  \| f_i \|_{L^{2}} \\
 \lesssim & R \|  (x_1'-c(q'))  \chi_{q'}  \mathcal{F}_1^{-1} f_1 \|_{L^2}   \cdot \Pi_{i=2}^{k}  \| f_i \|_{L^{2}} \\
 \lesssim & R^2 \|  \la \frac{x_1'-c(q')}{R} \ra  \chi_{q'}  \mathcal{F}_1^{-1} f_1 \|_{L^2}   \cdot \Pi_{i=2}^{k}  \| f_i \|_{L^{2}}.
\end{split}
\]
Finally since $[A,B]= -\frac{x_1}i \Delta \varphi_1(\frac{D'}i)$, we estimate its contribution as follows
\[
\begin{split}
& \| [A,B] \calE_1 \mathcal{F}_1 ( \chi_{q'}  \mathcal{F}_1^{-1} f_1) \cdot \Pi_{i=2}^{k}  \calE_i f_i \|_{L^{\frac2{k-1}}(q)} \\
= & \| x_1 \Delta \varphi_1(\frac{D'}i)  \calE_1 \mathcal{F}_1 (  \chi_{q'}  \mathcal{F}_1^{-1} f_1) \cdot \Pi_{i=2}^{k}  \calE_i f_i \|_{L^{\frac2{k-1}}(q)} \\
 \lesssim & \delta^{-1} R \|    \calE_1 ( \Delta \varphi_1(\xi')) \mathcal{F}_1 (  \chi_{q'}  \mathcal{F}_1^{-1} f_1) \cdot \Pi_{i=2}^{k}  \calE_i f_i \|_{L^{\frac2{k-1}}(q)} \\
  \lesssim & \delta^{-1} R \|  \Delta \varphi_1(\xi') \mathcal{F}_1 (  \chi_{q'}  \mathcal{F}_1^{-1} f_1) \|_{L^2} 
  \cdot \Pi_{i=2}^{k}  \| f_i \|_{L^{2}} \\
   \lesssim & \delta^{-1} R \|  \mathcal{F}_1 ( \chi_{q'}  \mathcal{F}_1^{-1} f_1) \|_{L^2} 
  \cdot \Pi_{i=2}^{k}  \| f_i \|_{L^{2}} \\
 \lesssim & \delta^{-1} R \|  \chi_{q'}  \mathcal{F}_1^{-1} f_1 \|_{L^2}   \cdot \Pi_{i=2}^{k}  \| f_i \|_{L^{2}} \\
 \lesssim & R^2 \|   \chi_{q'}  \mathcal{F}_1^{-1} f_1 \|_{L^2}   \cdot \Pi_{i=2}^{k}  \| f_i \|_{L^{2}},
\end{split}
\]
where, in passing to the last line, we have used that $\delta^{-1} \ll R$. Based on all the estimates above, we obtain \eqref{glocN} for $N=2$. 
The above argument contains all the ingredients necessary to obtain \eqref{glocN} for $N \geq 3$. 

Using \eqref{triangle}, \eqref{pois} and \eqref{glocN}, we obtain
\[
\begin{split}
& \| \calE_1 f_1 \cdot \Pi_{i=2}^{k}  \calE_i f_i \|_{L^{\frac2{k-1}}(q)}^{\frac2{k-1}} \\
 \leq  & A(R,\delta,\mu)^{\frac2{k-1}}
\sum_{q' \in \calC \calH_1(R)} \| \calE_1 \mathcal{F}_1 ( \chi_{q'} \mathcal{F}_1^{-1} f_1) \cdot \Pi_{i=2}^{k}  \calE_i f_i \|_{L^{\frac2{k-1}}(q)}^{\frac2{k-1}} \\
 \ls_N & A(R,\delta,\mu)^{\frac2{k-1}} \left( \sum_{q' \in \calC \calH_{1}(R)} \la  \frac{d(\pi_{N_1}q,q')}{R} \ra^{-N \cdot \frac2{k-1}} \| \la \frac{x'-c(q')}{R} \ra^N \chi_{q'} \mathcal{F}_1^{-1} f_1 \|_{L^2}^\frac2{k-1} \right)  \Pi_{i=2}^{k} \| f_i \|^{\frac2{k-1}}_{L^2} \\
 \ls_N & A(R,\delta,\mu)^{\frac2{k-1}}  \Pi_{i=2}^{k} \| f_i \|^{\frac2{k-1}}_{L^2} \\
 & \cdot \left( \sum_{q' \in \calC \calH_{1}(R)} \la  \frac{d(\pi_{N_1}q,q')}{R} \ra^{-(2N-(k-1)^2)} \| \la \frac{x'-c(q')}{R} \ra^N \chi_{q'} \mathcal{F}_1^{-1} f_1 \|_{L^2}^2\right)^{\frac1{k-1}} .
\end{split}
\]
In justifying the last inequality, we have used the simple estimate for sequences 
\[
\| a_i \cdot b_i \|_{l^{\frac2{k-1}}_i} \lesssim \| a_i \|_{l^2_i}  \| b_i \|_{l^{\frac2{k-2}}_i}, 
\]
together with the straightforward estimate
\[
\| \la  \frac{d(\pi_{N_1}q,q')}{R} \ra^{-\frac{(k-1)^2}2} \|_{l^\frac{2}{k-2}_{q'}} \lesssim 1. 
\]
Note that the previous inequality is \eqref{INS} with the improvement for $f_1$.
By repeating the procedure for all other terms $f_2,..,f_{k}$ to conclude with \eqref{INS}. 

Using \eqref{INS} we invoke the discrete Loomis-Whitney inequality in \eqref{LWd} to conclude the argument. 
For $i=1,..k$, we define the functions $g_i: \mathcal{L}(\calH_i) \rightarrow \R$ by
\[
g_i(\bj)= \left( \sum_{q' \in \calC \calH_{i}(R)} \la  \frac{d(q(\bj),q')}{R} \ra^{-(N-2(k-1)^2)} \| \la \frac{x'-c(q')}{R} \ra^N \chi_{q'} \mathcal{F}_i^{-1} f_i \|^2_{L^2} \right)^\frac{1}2, \bj \in \mathcal{L}(\calH_i) .
\]
For $N$ large enough (depending only on $n$), we claim the following estimate:
\begin{equation} \label{keyloc}
\| g_i \|_{l^2l^\infty(\mathcal{L}(\calH_i') \times \mathcal{L}(\calH_i''))} \ls C_i(\mu_i,\delta,R) \| f_i \|_{L^2}, \quad i=1,..k,
\end{equation}
where $C_i(\mu_i,\delta,R)= \min(1,(R \mu_i + 10 \delta)^{\frac{c_i}2}$). From \eqref{LWd} we conclude 
\begin{equation} \label{claimA2}
A(\mu, \delta,\delta^{-1}R) \ls A(\mu,\delta,R) \Pi_{i=1}^k C_i(\mu_i,\delta,R).
\end{equation}
We will return to this estimate at the end of the proof and show how it is used to close the induction. 

For now, we continue with the argument for \eqref{keyloc}. We prove this claim in the case $i=1$, the other choices of $i$ being analogous;
in this context we ignore completely the variable along $n_1$. The variable in $\calH_1$ that was originally labeled by $x_1'$
will be denoted by $x$ and is further split as follows $x=(x',x'') \in \R^{n-c_1-1} \times \R^{c_1}$, with the dual Fourier variables 
being $\xi=(\xi',\xi'')$. $f_1$ is localized in $B_{\mu_1+10 \delta R^{-1}}(M_1)$ where $M_1$ is a $n-c_1-1$-dimensional manifold. As we discussed in Section \ref{sbr}, we can parametrize it as follows: $(\xi', \Phi(\xi'))$ with $\xi' \in \R^{n-c_1-1}$ and $\Phi(\xi')=(\tilde{\varphi}_{1}(\xi'), ..., \tilde{\varphi}_{c_1}(\xi'))$; this is true at least locally, but given that we work with $f_1$
supported in $B_1(0,20 \delta^2)$ with $\delta \ll 1$, we can obtain the parametrization in the whole support of $f_1$. 

The scope of what follows next is to "flatten" the manifold $M_1$, since this allows us to exploit the localization in an easier way. 
We write
\[
\begin{split}
\mathcal{F}_1^{-1} f_1 (x) & = \int e^{i x \cdot \xi} f_1(\xi) d\xi \\
& = \int e^{i (x' \cdot \xi'+ x'' \cdot (\Phi(\xi')+ \xi''))} f_1(\xi', \Phi(\xi') + \xi'') d\xi'  d\xi'' \\
& = \int \left( e^{i x'' \Phi(D_{x'})} \int e^{i (x' \cdot \xi'+ x'' \cdot \xi'')} h_1(\xi', \xi'') d\xi'  \right) d\xi'',
\end{split}
\]
where $h_1(\xi',\xi'')=  f_1(\xi', \Phi(\xi') + \xi'')$ and $e^{i x'' \Phi(D_{x'})}$ is the operator with symbol $e^{i x'' \cdot \Phi(\xi')}$, acting on $L^2_{x'}$.  
From its definition we see that $h_1$ is supported in a set where $|\xi'| \leq \delta + 10 R^{-1}$ and $\| \xi'' \|_{\infty} \leq \mu_{1}+10 \delta R^{-1}$. A straightforward computation shows that  $\| h_1 \|_{L^2}=\| f_1 \|_{L^2}$. 

We fix $x''$ and estimate the following quantity:
\[
\begin{split}
\|\mathcal{F}_1^{-1} f_1 (\cdot, x'')\|_{L^2_{x'}} & = \|\mathcal{F}_{1,x''}^{-1} f_1 (\cdot, x'')\|_{L^2_{\xi'}} \\
& = \| \int e^{ix''(\Phi(\xi')+\xi'')} h_1(\cdot, \xi'') d\xi'' \|_{L^2_{\xi'}} \\
& \leq \int \| e^{ix''(\Phi(\xi')+\xi'')} h_1(\cdot, \xi'')  \|_{L^2_{\xi'}} d\xi'' \\
& = \int \|  h_1(\cdot, \xi'')  \|_{L^2_{\xi'}} d\xi'' \\
& \leq (\Pi_{j=1}^{c_1} (\mu_1+10 \delta R^{-1}) )^\frac12 \| h_1 \|_{L^2} \\
& = (\mu_1+10 \delta R^{-1})^\frac{c_1}2 \| f_1 \|_{L^2}. 
\end{split}
\]
In the first line we have used the fact that $e^{i x''_1 \Phi_1(D_{x'})}$ is an isometry on $L^2_{x'}$ and this is the reason for skipping it in later computations.
In the last two lines, both $L^2$ norms, $\| h_1 \|_{L^2}$ and $\| f_1 \|_{L^2}$ are meant with respect to all variables, that is $L^2_{\xi',\xi''}$.

Then we integrate the above estimate with respect to the $x''$ variable in an interval of size $R$ (in all directions contained in $x''$) and localize within a cube $q$ of size $R$ to obtain
\[
\| \mathcal{F}_1^{-1} f_1 \|^2_{L^2(q)} \les  (R\mu_1+10 \delta)^\frac{c_1}2 \| f_1 \|_{L^2}
\]
The constant $C_1(\mu_1,\delta,R)$ obtained here has the correct numerology, but the estimate above misses some additional localization 
that is necessary in closing the argument for \eqref{keyloc}. 

Thus we need to provide a version of the above estimate that is spatially localized.  
For a cube $q' \in \calH_1'$, we let $\chi_{q'}(x')$ be a bump function at scale $R$
that is highly concentrated in $q'$, and consider 
\[
\begin{split}
\chi_{q'}(x') \mathcal{F}_1^{-1} f_1 (x) & = \int e^{i x \cdot \xi} (\hat \chi_{q'} \ast_{\xi'} f_1)(\xi) d\xi \\
& = \int e^{i x \cdot \xi} \int \hat \chi_{q'} (\eta') f_1(\xi'-\eta', \Phi(\xi'-\eta') + \xi'') d \eta' d\xi'  d\xi'' \\
& = \int  \hat \chi_{q'} (\eta') \int \left( e^{i x'' \Phi(D_{x'}-\eta')} \int e^{i (x' \cdot \xi'+ x'' \cdot \xi'')} h_1(\xi', \xi'') d\xi'  \right) d\xi'' d \eta',
\end{split}
\]
where $h_{1\eta'}(\xi',\xi'')=  f_1(\xi'-\eta', \Phi(\xi'-\eta') + \xi'')$ and $e^{i x'' \Phi(D_{x'}-\eta')}$ is the operator with symbol $e^{i x'' \cdot \Phi(\xi'-\eta')}$.
 For each $\eta'$, it follows from the definition that $h_1$ is supported in a set where $|\xi'-\eta'| \leq \delta + 10 R^{-1}$ and 
 $\| \xi'' \|_{\infty} \leq \mu_{1}+10 \delta R^{-1}$. 
Also it holds true that $\| h_{1\eta'} \|_{L^2}=\| f_1 \|_{L^2}$ for every $\eta'$.  Since $\hat \chi_{q'} \in L^1_{\eta'}$, it then follows that the above is an $L^1$-type superposition of operators that we have just analyzed, therefore we obtain the following estimate
\[
\| \chi_{q'} \mathcal{F}_1^{-1} f_1 (\cdot, x'')\|_{L^2_{x'}}  \les (\mu_1+10 \delta R^{-1})^\frac{c_1}2 \| f_1 \|_{L^2}.
\]
By itself this is not a new estimate; however we can repeat the process with $\chi_{q'}$ replaced by 
$(\frac{x-c(q')}{R})^N \chi_{q'}$, which has similar properties to those of $\chi_{q'}$ to obtain
\[
\| (\frac{x-c(q')}{R})^N \chi_{q'} \mathcal{F}_1^{-1} f_1 (\cdot, x'')\|_{L^2_{x'}}  \ls_N  (\mu_1+10 \delta R^{-1})^\frac{c_1}2 \| f_1 \|_{L^2}.
\]
Integrating this on an interval of length $R$ in the direction of $x''$ and restricting it to a cube $q \in \calH_1(R)$ gives
\begin{equation} \label{bass2}
\|  \chi_{q'} \mathcal{F}_1^{-1} f_1 \|_{L^2(q)}  \ls_N \la \frac{d(\pi_{\calH_1'}q,q')}R \ra^{-N} 
(R \mu_1+10 \delta)^\frac{c_1}2 \| f_1 \|_{L^2}.
\end{equation}
This estimate will take care of the $l^\infty$ part of the norm in \eqref{keyloc};
however it is not good enough to perform the $l^2$ summation efficiently since on the right-hand side above
 there is no memory of the fact that we are estimating the 
part of $\mathcal{F}_1^{-1} f_1$ that is highly localized in $q' \times \calH_1''$. 
This is easily fixed: if $\chi_{q'}(x')$ is compactly supported in, say, $2q'$ (the dilation of $q'$ by a factor of $2$ from its center),
we let $\tilde \chi_{q'}$ be a bump function adapted to $4q'$ with $ \chi_{q'} \cdot \tilde \chi_{q'}= \chi_{q'}$. Then running the above argument again, gives
\[
\|  \chi_{q'} \mathcal{F}_1^{-1} f_1 \|_{L^2(q)}  \ls_N \la \frac{d(\pi_{\calH_1'}q,q')}R \ra^{-N}  (R \mu_1+10 \delta)^\frac{c_1}2
\| \tilde \chi_{q'} \mathcal{F}_1^{-1} f_1 \|_{L^2}.
\]
A straightforward argument shows that \eqref{keyloc} follows from this last claim. 

We now return to \eqref{claimA2}
\[
A(\mu, \delta,\delta^{-1}R) \leq C A(\mu,\delta,R) \Pi_{i=1}^k C_i(\mu_i,\delta,R)
\]
and show how it implies the result in Theorem \ref{MB}. Recall that $C$  is independent of $\delta$, $R$ and $\mu$ and that
$C_i(\mu_i,\delta,R)= \min(1,(R \mu_j^i + 10 \delta))^{\frac{c_i}2}$. 
The localization in Theorem \ref{MB} and the one in Definition \ref{DefI} are not quite the same, but they match provided that 
$R \geq \max\{\mu_{1}^{-1},..,\mu_k^{-1}\}$; we first provide the argument for this case.

For any $R \geq \max\{\mu_{1}^{-1},..,\mu_k^{-1}\}$, we iterate \eqref{claimA2} to obtain
\[
A(\mu, \delta,R) \leq C^N A(\mu,\delta,\delta^{N} R) \Pi_{m=1}^{N} \Pi_{i=1}^k C_i(\mu_i,\delta,\delta^{m} R).
\]
We pick $N$ such that $\delta^{-1} \leq \delta^{N} R \leq \delta^{-2}$. We quantify the effect of some $\mu_i$ for some $i \in \{1,..,k \}$
 in the expression above as follows:
\[
\Pi_{m=1}^N \min(1,(\delta^{m} R \mu_i + 10 \delta)^{\frac{c_i}2}).
\] 
In the above we gain factors of $\delta$ for as long as $\delta^{m} R \mu_i \leq 10 \delta$; the accumulating powers of $10^\frac{c_i}2$ 
are added to the factor $C^N$ above. Thus it follows that
\begin{equation} \label{cuant}
\Pi_{m=1}^N \min(1,(\delta^{m} R \mu_i + 10 \delta)^{\frac{c_i}2}) \leq C_0^N \mu_i^\frac{c_i}2,
\end{equation}
where $C_0$ is independent of $\delta,R,\mu$; here we have used the fact that we have chosen $R$ large enough.
Thus we conclude with:
\[
A(\mu, \delta,R) \leq C^N A(\mu,\delta,\delta^{N} R) \Pi_{i=1}^k \mu_i^\frac{c_i}2,
\]
for any $R \geq \max\{\mu_{1}^{-1},..,\mu_k^{-1}\}$ and where $N$ is chosen such that $\delta^{-1} \leq \delta^{N} R \leq \delta^{-2}$.
From this we obtain
\[
A(\mu, \delta,R) \leq C^N \max_{r \leq \delta^{-2}} A(\mu,\delta,r) \Pi_{i=1}^k \mu_i^\frac{c_i}2. 
\]
From the uniform pointwise bound
\[
\| \Pi_{i=1}^{n+1} \calE_i f_i \|_{L^\infty} \ls \Pi_{i=1}^{n+1}  \| \calE_i f_i \|_{L^\infty} \ls \Pi_{i=1}^{n+1}  \| f_i \|_{L^2}
\]
it follows that $\max_{r \leq \delta^{-2}} A(\mu,\delta,r) \lesssim \delta^{-10}$. For  $R \in [\delta^{-N},\delta^{-N-1}]$, and $N$ large enough so that $R \geq \max\{\mu_{1}^{-1},..,\mu_k^{-1}\}$, the above implies
\[
A(\mu, \delta,R)  \leq C^N C(\delta) \leq R^\epsilon C(\delta)
\]
provided that $C^N \leq \delta^{-N\epsilon}$. Therefore choosing $\delta=C^{-\frac{1}{\epsilon}}$ leads to the desired result. 

We now consider the case $R \leq \max\{\mu_{1}^{-1},..,\mu_k^{-1}\}$. In this case, Definition \ref{DefI} requires localization properties
at larger scales since $R^{-1} \geq \min\{\mu_{1},..,\mu_k\}$; this is not a problem at all. However, when we run the argument above, 
we will not capture the full factor $\Pi_{i=1}^k \mu_i^\frac{c_i}2$, and this is reasonable given that we work with weaker localization properties. 

For simplicity, let us assume $\mu_1 \leq \mu_2 \leq  .... \leq \mu_k$ (which does not restrict the generality of the argument) and look at the particular case 
$\mu_1 \leq R^{-1} \leq \mu_2$ - the other cases are treated in a similar fashion. Then we run the above argument  and note that the factor $\Pi_{i=1}^k \mu_i^\frac{c_i}2$
needs to be replaced by
\[
R^{-\frac{c_1}2} \Pi_{i=2}^k \mu_i^\frac{c_i}2,
\]
which is a simple consequence of re-quantifying the estimate \eqref{cuant}. Thus, relatively to \eqref{Lf}, we are missing a factor of $(R \mu_1)^\frac{c_1}2$, which we needs to be recovered. As of now we obtain that the inequality
\begin{equation} \label{bass}
\| \Pi_{i=1}^{k} \calE_i f_i \|_{L^{\frac2{k-1}}(Q)} \leq C(\epsilon) R^\epsilon R^{-\frac{c_1}2} \Pi_{i=2}^k \mu_i^\frac{c_i}2 \Pi_{i=1}^{k} \| f_i \|_{L^2} 
\end{equation}
holds true for all cubes $Q \in \calC(R)$, under the assumption that $f_i$ is supported in $B_{\mu_i + 10R^{-1}}(M_i) \cap B(0,\delta), \forall i=1,..k$
As before we assume that $Q$ is centered at the origin. To improve this, we take advantage of the fact that, in the orginal estimate that we seek to establish, $f_1$ has better localization properties: it is supported in $B_{\mu_1}(M_1)$.  \eqref{INS} allows us to replace $\| f_1 \|_{L^2}$ in \eqref{bass} by
\[
\left( \sum_{q' \in \calC \calH_1(R)} \la \frac{d(\pi_{1} Q,q')}R \ra^{-(2N-n^2)} \| \la \frac{x_1'-c(q')}R \ra^{N} \chi_{q'} \mathcal{F}_1^{-1} f_1 \|_{L^2}^2 \right)^\frac12
\]
Morally this implies that, for the estimate at scale $R$, the contribution from $f_1$ that really matters is $\| \mathcal{F}_1^{-1} f_1 \|_{L^2(q)}$, where $q=\pi_1 Q \in \calC \calH_1(R)$, and the rest of the terms come with appropriate decay. On the other hand, a similar argument to the one used for \eqref{bass2} gives that for any $q \in \calC \calH_1(R)$
\[
\| \mathcal{F}_1^{-1} f_1 \|_{L^2(q)}  \ls  (R \mu_1)^\frac{c_1}2 \| f_1 \|_{L^2};
\]
here we use that $f_1$ has better localization properties. This brings in the correction we needed in \eqref{bass}; we leave the details as an exercise to the interested reader. 

 \subsection*{Acknowledgement}
Part of this work was supported by an NSF grant, DMS-1900603.
The author thanks Jonathan Bennett for in-depth help with the current literature on the subject. 
The author thanks Pavel Zorin-Kranich for bringing to our attention that some additional details would have been useful in our original work on the subject in \cite{Be1};
the corresponding details have been added in the current paper in Section \ref{MBP} in proving \eqref{INS} for the case $N=2$. 

\bibliographystyle{amsplain} \bibliography{HA-refs}

\end{document}